\xpatchcmd{\proof}{\itshape}{\bfseries}{}{}
\newtheorem{theorem}{Theorem}
\newtheorem{theoremA}{Theorem}
\newtheorem{lemma}{Lemma}
\newtheorem{corollary}{Corollary}
\theoremstyle{remark}
\newtheorem{remark}{Remark}
\newtheorem{example}{Example}
\newcommand{\C}{\mathbb{C}}
\newcommand{\D}{\Omega}
\newcommand{\ep}{\varepsilon}
\newcommand{\zb}{\overline{z}}
\newcommand{\wb}{\overline{w}}
\newcommand{\Balpha}{\boldsymbol{\alpha}}
\newcommand{\Bbeta}{\boldsymbol{\beta}}
\newcommand{\Btheta}{\boldsymbol{\theta}}
\title{On spectra of Hankel operators on the polydisc}
\author{\v{Z}eljko \v{C}u\v{c}kovi\'c} 
\address[\v{Z}eljko \v{C}u\v{c}kovi\'c]{University of Toledo, Department of 
				Mathematics \& Statistics, Toledo, OH 43606, USA} 
 \email{Zeljko.Cuckovic@utoledo.edu} 
\author{Zhenghui Huo}
\address[Zhenghui Huo]{Duke Kunshan University, Division of Natural 
	and Applied Sciences, Kunshan, Jiangsu 215316, China}
\email{zhenghui.huo@dukekunshan.edu.cn} 
\author{S\"{o}nmez \c{S}ahuto\u{g}lu} 
\address[S\"{o}nmez \c{S}ahuto\u{g}lu]{University of Toledo, Department of 	
				Mathematics \& Statistics, Toledo, OH 43606, USA} 
\email{Sonmez.Sahutoglu@utoledo.edu} 
\subjclass[2020]{Primary  47B35; Secondary 32A36}
\keywords{Spectrum, Hankel operator, polydisc,  Bergman kernel}
\date{\today}
\begin{document}

\begin{abstract}
We give sufficient conditions for the essential spectrum of the Hermitian square of a class of 
Hankel operators on the Bergman space of the polydisc to contain intervals.  We also 
compute the spectrum in case the symbol is a monomial.
\end{abstract}

\maketitle

The study of spectral properties of Toeplitz and Hankel operators acting on the Bergman 
space is a difficult topic.  In recent years some progress has been made in understanding 
spectral properties of Toeplitz operators.  We highlight the result by Sundberg and Zheng 
\cite{SundbergZheng10}  who have proved that the spectra and essential spectra of Toeplitz 
operators on the unit disc $\mathbb{D}$ need not be connected. Their result shows a sharp 
difference with the spectra of Toeplitz operators on the Hardy space.  As is well-known, 
Widom \cite{Widom64} showed that for any $L^{\infty}$  symbol $\psi$ on the unit circle, 
the spectrum of the Toeplitz operator $T_{\psi}$ is  a connected subset of $\C$. Similarly 
Douglas \cite[Theorem 7.45]{DouglasBook} proved that the essential spectrum of $T_{\psi}$ 
is also connected. In this context, we also mention papers \cite{ZhaoZheng16,GuoZhaoZheng2023} 
which study spectra of certain classes of the Bergman Toeplitz operators on the unit disk.  
On the other hand,  we are not aware of much work done about the spectral 
properties of Hankel  operators.  

Since the Hankel operator $H_{\psi}$ does not map the Bergman space into itself, we will 
consider the Hermitian square $H_{\psi}^* H_{\psi}$ and we will obtain some initial results 
about the spectrum. In this paper we will only be concerned with symbols that are continuous 
up to the boundary of the domain.  In the case of the unit disc, $H_{\psi}$ is compact for 
$\psi\in C(\overline{\mathbb{D}})$ (see, for instance, \cite[Proposition 8]{AxlerConwayMcDonald82}); 
hence the spectrum of  $H_{\psi}^* H_{\psi}$ is discrete. Like the situation on the unit disc, 
on bounded strongly pseudoconvex domains in $\C^n$, $H_{\psi}$ is compact  \cite{CuckovicSahutoglu09}.  
Then on such domains the spectrum of $H_{\psi}^* H_{\psi}$ is discrete as well.  
In this paper, we focus on the polydisc  and find some sufficient conditions in terms of behavior 
$\psi$ on the boundary so that the spectrum of $H_{\psi}^* H_{\psi}$ contains intervals. 
One of the reasons for this departure from the one-dimensional and strongly pseudoconvex 
case is the fact that Hankel operators with symbols continuous on $\overline{\mathbb{D}^2}$ 
may not be compact (see \cite{Le10,ClosSahutoglu18}). We note that, in case the symbol is 
smooth on the closure the same result was proven in  \cite{CuckovicSahutoglu09}. With 
regard to Sundberg-Zheng result, from the same papers mentioned in this paragraph, 
we know that if the symbol $\psi$ is holomorphic along any disc in the boundary of a convex 
domain, then $H_{\psi}$ is compact \cite{CuckovicSahutoglu09,CelikSahutogluStraube23,Zimmer2023}; 
hence the spectrum of $H_{\psi}^* H_{\psi}$ is disconnected. 

At this point we would like to mention the well known formula connecting Hankel and 
Toeplitz operators 
\[H_{\psi}^* H_{\psi} = T_{|\psi|^2} - T_{\overline{\psi}} T_{\psi}.\]

Thus our results could shed a new light on the spectra of semicommutators of Toeplitz operators. 
For more information about Hankel and Toeplitz operators on the unit disc we refer the reader to 
a standard reference \cite{ZhuBook}. 

In the rest of the paper, we prove two sufficient conditions for the spectrum of 
$H^*_{\psi} H_{\psi}$ to contain intervals. We also compute the spectrum of 
$H^*_{\psi} H_{\psi}$ in the case $\psi$ is monomial.

\section{Main Results}
Let $\D$ be a bounded domain in $\C^n$ and $A^2(\D)$ denote the Bergman space, 
the set of square integrable holomorphic functions on $\D$.  We denote the Bergman projection 
by $P^{\D}$. Then for a bounded measurable function $\psi$ on $\Omega$, Hankel operator 
$H^{\Omega}_\psi$  on $A^2(\D)$ with symbol $\psi$ is defined as 
\[H^{\Omega}_{\psi} f=(I-P^{\D})(\psi f)\]
for  $f\in A^2(\Omega).$
Here $I$ denotes the identity operator. For simplicity, we will simply write $H_\psi$ when there is 
no confusion about the domain. We note that the Toeplitz operator $T_{\psi}$ is defined as 
\[T_{\psi}f=P(\psi f)\] 
for $f\in A^2(\D)$. 

The spectrum for $H^*_{\psi}H_{\psi}$, for symbols continuous up to the boundary,  
is a discrete set for a large class of domains on which the operator is compact 
\cite{CuckovicSahutoglu09}. So it would be interesting to know the sufficient 
conditions for the spectrum to contain intervals. 

Let $\sigma(T)$ denote the spectrum of a linear map $T$. The set of eigenvalues is called 
the point spectrum $\sigma_p(T)$ \cite[VII Definition 6.2]{ConwayBook}. The discrete spectrum, 
$\sigma_d(T)$ is composed of eigenvalues with finite (algebraic) multiplicity that are isolated 
points of $\sigma(T)$. Finally, the essential spectrum $\sigma_e(T)$ is defined as 
$\sigma_e(T)=\sigma(T)\setminus \sigma_d(T)$. A characterization of the essential 
spectrum in our set-up, called Weyl's Criterion, is presented in Theorem \ref{ThmWeyl} below. 
We note that for self-adjoint operators $T_1$ and $T_2$ we have $\sigma_e(T_1)=\sigma_e(T_2)$ 
if $T_1-T_2$ is compact (see, for instance, \cite[Theorem 14.6]{HislopBook} 
or \cite[XI Proposition 4.2]{ConwayBook}). 

Our first result is about the essential spectrum of $H^*_{\psi}H_{\psi}$ on $A^2(\mathbb{D}^n)$ 
when $\psi$ is a product of two functions that depend on different variables. We note that, 
$H^{\mathbb{D}^{n-1*}}_{\varphi}$ in the theorem below denotes the adjoint 
of the operator $H^{\mathbb{D}^{n-1}}_{\varphi}$.

\begin{theorem}\label{ThmProdSym} 
Let $\varphi\in C(\overline{\mathbb{D}^{n-1}}),\chi\in C(\overline{\mathbb{D}})$ and  
$\psi(z',z_n)=\varphi(z')\chi(z_n)$ for $z'\in \overline{\mathbb{D}^{n-1}},$ and 
$z_n\in \overline{\mathbb{D}}$. Then the essential spectrum of $H^*_{\psi}H_{\psi}$ 
contains  the set
\[\left\{|\chi(q)|^2\mu:q\in b\mathbb{D}, 
\mu\in \sigma(H^{\mathbb{D}^{n-1*}}_{\varphi}H^{\mathbb{D}^{n-1}}_{\varphi})\right\}.\]
\end{theorem}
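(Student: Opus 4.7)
The plan is to invoke Weyl's criterion (Theorem~\ref{ThmWeyl}): for each fixed $q \in b\mathbb{D}$ and $\mu \in \sigma((H^{\mathbb{D}^{n-1}}_{\varphi})^* H^{\mathbb{D}^{n-1}}_{\varphi})$, I would construct a normalized sequence $\{f_k\} \subset A^2(\mathbb{D}^n)$ with $f_k \to 0$ weakly such that $\|(H_{\psi}^* H_{\psi} - |\chi(q)|^2 \mu I) f_k\| \to 0$. The natural candidate is $f_k = g_k \otimes h_{j(k)}$, where $\{g_k\} \subset A^2(\mathbb{D}^{n-1})$ is an approximate $\mu$-eigenvector sequence for $(H^{\mathbb{D}^{n-1}}_{\varphi})^* H^{\mathbb{D}^{n-1}}_{\varphi}$ obtained by applying Weyl's criterion to this self-adjoint operator in dimension $n-1$, and $h_j = K^{\mathbb{D}}_{a_j}/\|K^{\mathbb{D}}_{a_j}\|$ are normalized Bergman reproducing kernels at points $a_j \in \mathbb{D}$ with $a_j \to q$. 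The kernels satisfy $\|h_j\| = 1$, $h_j \to 0$ weakly in $A^2(\mathbb{D})$, and $|h_j|^2\, dA$ concentrates weakly-$*$ at $q$ against $C(\overline{\mathbb{D}})$.

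The computational core is the following identity. Since $\mathbb{D}^n = \mathbb{D}^{n-1} \times \mathbb{D}$ is a product, $P^{\mathbb{D}^n} = P^{\mathbb{D}^{n-1}} \otimes P^{\mathbb{D}}$; combining this with the factorization $\psi(z',z_n) = \varphi(z')\chi(z_n)$ and the identity $H_{\psi}^* H_{\psi} = T_{|\psi|^2} - T_{\overline{\psi}} T_{\psi}$ from the introduction, a direct computation followed by adding and subtracting $T_{\overline{\varphi}} T_{\varphi} g \otimes T_{|\chi|^2} h$ yields
\[
H_{\psi}^* H_{\psi}(g \otimes h) = \bigl((H^{\mathbb{D}^{n-1}}_{\varphi})^* H^{\mathbb{D}^{n-1}}_{\varphi} g\bigr) \otimes \bigl(T_{|\chi|^2} h\bigr) + \bigl(T_{\overline{\varphi}} T_{\varphi} g\bigr) \otimes \bigl((H^{\mathbb{D}}_{\chi})^* H^{\mathbb{D}}_{\chi} h\bigr).
\]
Three approximations then close the estimate. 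First, Weyl gives $(H^{\mathbb{D}^{n-1}}_{\varphi})^* H^{\mathbb{D}^{n-1}}_{\varphi} g_k = \mu g_k + r_k$ with $\|r_k\| \to 0$. Second, since $\chi \in C(\overline{\mathbb{D}})$ the disc Hankel operator $H^{\mathbb{D}}_{\chi}$ is compact (as recalled in the introduction), so $h_j \to 0$ weakly forces $\|(H^{\mathbb{D}}_{\chi})^* H^{\mathbb{D}}_{\chi} h_j\| \to 0$. Third, the key localization estimate
\[
\|T_{|\chi|^2} h_j - |\chi(q)|^2 h_j\|_{L^2} \le \bigl\|(|\chi|^2 - |\chi(q)|^2)\, h_j\bigr\|_{L^2(\mathbb{D})} \to 0
\]
follows from continuity of $\chi$ at $q$ together with concentration of $|h_j|^2\, dA$ at $q$. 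Combined with uniform boundedness of $T_{\overline{\varphi}} T_{\varphi}$ on $\{g_k\}$, a diagonal choice of $j(k)$ growing sufficiently fast produces $\|(H_{\psi}^* H_{\psi} - |\chi(q)|^2 \mu I) f_k\| \to 0$.

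Finally, $\|f_k\| = 1$ is immediate, and weak convergence $f_k \to 0$ in $A^2(\mathbb{D}^n) = A^2(\mathbb{D}^{n-1}) \otimes A^2(\mathbb{D})$ is verified first on elementary tensors $F_1 \otimes F_2$ (where $|\langle f_k, F_1 \otimes F_2\rangle| = |\langle g_k, F_1\rangle \langle h_{j(k)}, F_2\rangle|$ tends to zero by boundedness of $\{g_k\}$ and weak nullity of $h_{j(k)}$) and then extended to arbitrary $F \in A^2(\mathbb{D}^n)$ by density. I expect the main obstacles to be: (i)~$\mu$ lies only in the full spectrum and not necessarily the essential spectrum of the $(n-1)$-dimensional operator, so $g_k$ may be a single repeated eigenvector that fails on its own to go to zero weakly, and the weak decay of the tensor product must come entirely from the disc factor $h_{j(k)}$; and (ii)~the localization of $T_{|\chi|^2}$ to its boundary value $|\chi(q)|^2$, which depends crucially on the concentration of the normalized Bergman kernels at the boundary of $\mathbb{D}$ and on continuity (not smoothness) of $\chi$ up to $\overline{\mathbb{D}}$.
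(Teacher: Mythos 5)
Your proposal is correct, and the overall skeleton matches the paper's: both build a Weyl sequence of the form $g_k(z')\,k^{\mathbb{D}}_{p}(z_n)$ with $\{g_k\}$ an approximate $\mu$-eigenvector sequence in $A^2(\mathbb{D}^{n-1})$ and normalized Bergman kernels concentrating at $q$, both get weak nullity entirely from the disc factor, and both rest on the same localization estimate $\|(\eta-\eta(q))k^{\mathbb{D}}_p\|\to 0$ as $p\to q$ for continuous $\eta$. The computational core, however, is genuinely different. The paper freezes the symbol at $z_n=q$, setting $\psi_q=\chi(q)\varphi$, expands $H^*_{\psi}H_{\psi}=H^*_{\psi_q}H_{\psi_q}+H^*_{\psi}H_{\psi-\psi_q}+H^*_{\psi-\psi_q}H_{\psi_q}$, and kills the two cross Hankel terms by localization; the exact term is computed using only that $\psi_q$ is independent of $z_n$. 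You instead prove the exact tensor identity $H^*_{\psi}H_{\psi}=\bigl(H^{\mathbb{D}^{n-1}*}_{\varphi}H^{\mathbb{D}^{n-1}}_{\varphi}\bigr)\otimes T_{|\chi|^2}+\bigl(T_{\overline{\varphi}}T_{\varphi}\bigr)\otimes\bigl(H^{\mathbb{D}*}_{\chi}H^{\mathbb{D}}_{\chi}\bigr)$ (which checks out against $T_{|\psi|^2}-T_{\overline{\psi}}T_{\psi}=T_{|\varphi|^2}\otimes T_{|\chi|^2}-T_{\overline{\varphi}}T_{\varphi}\otimes T_{\overline{\chi}}T_{\chi}$), then handle the second summand by compactness of $H^{\mathbb{D}}_{\chi}$ for $\chi\in C(\overline{\mathbb{D}})$ and the first by localizing $T_{|\chi|^2}$ to $|\chi(q)|^2$. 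Your route is arguably cleaner for this particular theorem because the error terms are transparent and no cross Hankel products appear, but it leans on two inputs the paper's proof avoids --- the full product factorization of the symbol and compactness of the one-variable Hankel operator --- which is why the paper's symbol-freezing argument, unlike yours, recycles essentially verbatim to prove Theorem \ref{ThmGenSym} for non-product symbols. Your concluding remarks correctly identify the two delicate points (that $\mu$ need only lie in the full spectrum downstairs, and that only continuity of $\chi$ is available for the localization), and both are handled adequately.
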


In case the symbol is separable we have the following corollary,  proof of which will be presented 
in Section \ref{SecProof1}.

\begin{corollary}\label{CorProdSym} 
Let $\chi_j\in C(\overline{\mathbb{D}})$ for $j=1,\ldots, n$ and 
$\psi(z_1,\ldots,z_n)=\chi_1(z_1)\chi_2(z_2)\cdots\chi_n(z_n)$.  Then the essential spectrum of 
$H^*_{\psi}H_{\psi}$ contains the set  
\[\bigcup_{j=1}^n\left\{\mu\prod_{k\neq j}|\chi_k(q_k)|^2:q_k\in b\mathbb{D}, 
\mu\in \sigma(H^{\mathbb{D}^*}_{\chi_j}H^{\mathbb{D}}_{\chi_j})\right\}.\]
\end{corollary}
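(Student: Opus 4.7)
The plan is to induct on $n\geq 2$, iteratively applying Theorem~\ref{ThmProdSym} to peel off one variable at a time and using the permutation symmetry of the polydisc to select which variable gets peeled off. For the base case $n=2$, Theorem~\ref{ThmProdSym} applied with $\varphi=\chi_1$ and $\chi=\chi_2$ gives the $j=1$ piece of the union directly. For the $j=2$ piece, the coordinate swap $\tau(z_1,z_2)=(z_2,z_1)$ induces a unitary $U_\tau$ on $A^2(\mathbb{D}^2)$ with $U_\tau H_\psi U_\tau^{-1}=H_{\psi\circ\tau}$, so $H^*_\psi H_\psi$ and $H^*_{\psi\circ\tau}H_{\psi\circ\tau}$ are unitarily equivalent and share their essential spectrum; applying Theorem~\ref{ThmProdSym} to $\psi\circ\tau(z_1,z_2)=\chi_2(z_1)\chi_1(z_2)$ then produces the $j=2$ piece.

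For the inductive step, I would assume the corollary on $\mathbb{D}^{n-1}$ and fix $j\in\{1,\dots,n\}$. Picking any $i\neq j$ (possible since $n\geq 3$), the permutation unitary above lets me reduce to $i=n$ without changing $\sigma_e(H^*_\psi H_\psi)$. Writing $\psi(z',z_n)=\varphi(z')\chi_n(z_n)$ with $\varphi(z')=\prod_{k<n}\chi_k(z_k)$ on $\mathbb{D}^{n-1}$, Theorem~\ref{ThmProdSym} yields
\[\sigma_e(H^*_\psi H_\psi)\supseteq\bigl\{|\chi_n(q_n)|^2\tilde\mu : q_n\in b\mathbb{D},\ \tilde\mu\in\sigma(H^{\mathbb{D}^{n-1}*}_\varphi H^{\mathbb{D}^{n-1}}_\varphi)\bigr\}.\]
The inductive hypothesis applied to $\varphi$ on $\mathbb{D}^{n-1}$ with the same index $j$ (which now satisfies $j<n$) gives
\[\Bigl\{\mu\prod_{k\neq j,\,k<n}|\chi_k(q_k)|^2 : q_k\in b\mathbb{D},\ \mu\in\sigma(H^{\mathbb{D}^*}_{\chi_j}H^{\mathbb{D}}_{\chi_j})\Bigr\}\subseteq\sigma_e(H^{\mathbb{D}^{n-1}*}_\varphi H^{\mathbb{D}^{n-1}}_\varphi)\subseteq\sigma(H^{\mathbb{D}^{n-1}*}_\varphi H^{\mathbb{D}^{n-1}}_\varphi).\]
Substituting this into the previous display, letting $q_n$ vary in $b\mathbb{D}$, and taking the union over $j$ closes the induction.

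The argument is essentially bookkeeping once Theorem~\ref{ThmProdSym} is in hand; the only things requiring real care are (i) the unitary equivalence of $H^*_\psi H_\psi$ under coordinate permutations, which is immediate since a permutation of variables is a biholomorphism of $\mathbb{D}^n$ with constant unimodular Jacobian, and (ii) the need to ground the induction at $n=2$ rather than $n=1$: for $n=1$ the putative statement collapses to $\sigma(H^{\mathbb{D}^*}_{\chi_1}H^{\mathbb{D}}_{\chi_1})\subseteq\{0\}$ (since disc Hankel operators with continuous symbol are compact), which is generally false.
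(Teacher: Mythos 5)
Your argument is correct and is essentially the paper's proof: the paper also iterates Theorem~\ref{ThmProdSym}, reducing to a single index $j$ by symmetry ("without loss of generality") and inductively multiplying in the factors $|\chi_k(q_k)|^2$, using that $\sigma_e\subseteq\sigma$ at each intermediate stage. Your write-up merely runs the induction in the opposite direction (peeling off the last variable rather than building up from one variable) and is more explicit about the permutation unitary and the need to start at $n=2$, both of which the paper leaves implicit.
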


In the next theorem we give a sufficient condition for the spectrum to contain an interval 
for more general symbols than the ones in Theorem \ref{ThmProdSym}. 

\begin{theorem}\label{ThmGenSym}
Let $\psi \in C(\overline{\mathbb{D}^n}), 1\leq k\leq n-1,$ and $\psi_q(z')=\psi(z',q)$ 
for $z'\in \mathbb{D}^{n-1}$ and $q\in b\mathbb{D}$. Assume that 
$q\to \| H^{\mathbb{D}^{n-1}}_{\psi_q}\|$ is non-constant. Then the essential spectrum 
of $H^*_{\psi}H_{\psi}$ contains an open interval. 
\end{theorem}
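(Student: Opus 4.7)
My plan is to exploit a localization principle at each boundary point $q \in b\mathbb{D}$: I will show that $\lambda_q := \|H^{\mathbb{D}^{n-1}}_{\psi_q}\|^2$ lies in $\sigma_e(H^*_\psi H_\psi)$, and then use continuity and connectedness of $b\mathbb{D}$ to extract an interval. Once $\{\lambda_q : q \in b\mathbb{D}\} \subseteq \sigma_e(H^*_\psi H_\psi)$ is known, uniform continuity of $\psi$ on $\overline{\mathbb{D}^n}$ makes $q \mapsto \psi_q$ continuous from $b\mathbb{D}$ into $C(\overline{\mathbb{D}^{n-1}})$, and the inequality $\|H^{\mathbb{D}^{n-1}}_\varphi\| \leq C\|\varphi\|_\infty$ promotes this to continuity of $q \mapsto \lambda_q$ on the connected set $b\mathbb{D}$. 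Being non-constant by hypothesis, the image of $q \mapsto \lambda_q$ is a closed interval of positive length, whose interior is the sought-after open interval in $\sigma_e(H^*_\psi H_\psi)$.

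The substantive task is thus constructing, for each fixed $q \in b\mathbb{D}$, a singular Weyl sequence for $H^*_\psi H_\psi$ at $\lambda_q$. Writing $T_q := H^{\mathbb{D}^{n-1}*}_{\psi_q} H^{\mathbb{D}^{n-1}}_{\psi_q}$, positivity yields $\lambda_q = \|T_q\| \in \sigma(T_q)$, so there exist $f_j \in A^2(\mathbb{D}^{n-1})$ with $\|f_j\| = 1$ and $\|(T_q - \lambda_q) f_j\| \to 0$. Denoting by $k_\zeta$ the normalized Bergman kernel of $\mathbb{D}$ at $\zeta$, I set $u_j := f_j \otimes k_{\zeta_j}$ for a sequence $\zeta_j \to q$ in $\mathbb{D}$ chosen below. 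Then $\|u_j\| = 1$, and because $k_{\zeta_j} \to 0$ weakly in $A^2(\mathbb{D})$, testing against simple tensors in $A^2(\mathbb{D}^n) \cong A^2(\mathbb{D}^{n-1}) \otimes A^2(\mathbb{D})$ shows $u_j \to 0$ weakly in $A^2(\mathbb{D}^n)$.

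The heart of the argument is the localization identity
\[
H^*_\psi H_\psi(f \otimes k_\zeta) \;=\; (T_q f) \otimes k_\zeta + E(\zeta, f),
\qquad
\|E(\zeta, f)\|_{L^2(\mathbb{D}^n)} \leq \varepsilon(\zeta)\,\|f\|,
\]
with $\varepsilon(\zeta) \to 0$ as $\zeta \to q$. Using the tensor factorization $P^{\mathbb{D}^n} = P_1 P_2$ of the polydisc Bergman projection into commuting one-variable projections, one computes
\[
P_2[\psi f \otimes k_\zeta](z', z_n) = f(z')\,T^{\mathbb{D}}_{\psi(z', \cdot)}k_\zeta(z_n),
\]
which reduces matters to the one-disc Berezin limit $\|T^{\mathbb{D}}_h k_\zeta - h(q) k_\zeta\|_{L^2(\mathbb{D})} \to 0$ as $\zeta \to q$, valid for every $h \in C(\overline{\mathbb{D}})$ (a standard argument via $\|T^{\mathbb{D}}_h k_\zeta\|^2 = \widetilde{|h|^2}(\zeta) - \|H^{\mathbb{D}}_h k_\zeta\|^2$, using boundary behavior of the Berezin transform and compactness of one-disc Hankel operators). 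Uniform continuity of $\psi$ makes $\{\psi(z', \cdot) - \psi(z', q) : z' \in \overline{\mathbb{D}^{n-1}}\}$ a compact subset of $C(\overline{\mathbb{D}})$, which upgrades this to convergence uniform in $z'$; a parallel Berezin estimate lets me replace the multiplier $\psi(z', z_n)$ by $\psi_q(z')$ against the factor $k_\zeta(z_n)$, also with error $o(1)\|f\|$. Iterating the same reduction for $H^*_\psi$ applied to $H_\psi(f \otimes k_\zeta)$ yields the full identity, and choosing $\zeta_j \to q$ with $\varepsilon(\zeta_j) < 1/j$ produces $\|(H^*_\psi H_\psi - \lambda_q) u_j\| \leq \varepsilon(\zeta_j) + \|(T_q - \lambda_q)f_j\| \to 0$ by the triangle inequality.

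The principal obstacle is making the localization identity genuinely uniform in $f$: the one-disc Berezin limit is a pointwise statement in the symbol $h$, and one must leverage compactness of the slice-symbol family in $C(\overline{\mathbb{D}})$ — resting on continuity of $\psi$ on the closed polydisc — to obtain uniformity across all $z' \in \overline{\mathbb{D}^{n-1}}$. Without this uniformity the error $\varepsilon(\zeta)$ would depend on $f$, and the singular Weyl sequence would fail to converge in norm.
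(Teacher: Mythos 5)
Your proposal is correct and follows essentially the same route as the paper: a Weyl sequence of the form $f_j\otimes k^{\mathbb{D}}_{p_j}$ with $f_j$ an approximate eigenvector of $H^{\mathbb{D}^{n-1}*}_{\psi_q}H^{\mathbb{D}^{n-1}}_{\psi_q}$ at $\lambda_q=\|H^{\mathbb{D}^{n-1}}_{\psi_q}\|^2$ and $p_j\to q$, followed by continuity of $q\mapsto\lambda_q$ and connectedness of $b\mathbb{D}$. The only cosmetic difference is that you route the error estimate through one-variable Toeplitz/Berezin limits and Arzel\`a--Ascoli, whereas the paper bounds $\|(\psi-\psi_q)h\,k^{\mathbb{D}}_p\|_{L^2}$ directly from uniform continuity of $\psi$ and the concentration of $k^{\mathbb{D}}_p$ at $q$ --- an estimate that is already uniform over $h$ of bounded norm, so the uniformity you single out as the principal obstacle is obtained for free (and is in any case dispensable via the diagonal choice of $p_j$).
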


The following corollary can be proved using Theorems \ref{ThmProdSym} 
or \ref{ThmGenSym}. We will mention both in Section \ref{SecProof1}.
\begin{corollary}\label{CorProd2Sym} 
Let $\varphi\in C(\overline{\mathbb{D}^{n-1}})$ and $\chi\in C(\overline{\mathbb{D}})$ 
such that $|\chi|$ is not constant on the unit circle and  $\varphi$ is not holomorphic. 
Then the essential spectrum of $H^*_{\psi}H_{\psi}$ contains an open interval where 
$\psi(z',z_n)=\varphi(z')\chi(z_n)$ for $z'\in \overline{\mathbb{D}^{n-1}}$ and 
$z_n\in \overline{\mathbb{D}}$. 
\end{corollary}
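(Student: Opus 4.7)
The plan is to derive the corollary as a direct application of Theorem \ref{ThmGenSym}, and then to sketch an alternative via Theorem \ref{ThmProdSym}.

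For the first route, fix $q\in b\mathbb{D}$ and observe that the slice symbol $\psi_q(z')=\chi(q)\varphi(z')$ is a scalar multiple of $\varphi$. Since Hankel operators are linear in the symbol, $H^{\mathbb{D}^{n-1}}_{\psi_q}=\chi(q)\,H^{\mathbb{D}^{n-1}}_\varphi$, and therefore
\[\|H^{\mathbb{D}^{n-1}}_{\psi_q}\|=|\chi(q)|\,\|H^{\mathbb{D}^{n-1}}_\varphi\|.\]
The assumption that $\varphi$ is not holomorphic forces $H^{\mathbb{D}^{n-1}}_\varphi\neq 0$, so $\|H^{\mathbb{D}^{n-1}}_\varphi\|>0$. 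Combined with non-constancy of $|\chi|$ on the unit circle, this makes $q\mapsto\|H^{\mathbb{D}^{n-1}}_{\psi_q}\|$ non-constant, and Theorem \ref{ThmGenSym} directly yields an open interval in $\sigma_e(H^*_\psi H_\psi)$.

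For the alternative, set $T=H^{\mathbb{D}^{n-1*}}_\varphi H^{\mathbb{D}^{n-1}}_\varphi$. By the same non-vanishing observation, $T$ is a nonzero positive self-adjoint operator, so $\mu_0:=\|T\|=\|H^{\mathbb{D}^{n-1}}_\varphi\|^2$ lies in $\sigma(T)$ and is strictly positive. The set $\{|\chi(q)|^2:q\in b\mathbb{D}\}$ is the continuous image of the connected, compact unit circle, hence a compact interval, and is nondegenerate by the hypothesis on $|\chi|$; call it $[a,b]$ with $a<b$. Theorem \ref{ThmProdSym} then places the nondegenerate interval $[a\mu_0,b\mu_0]$ inside $\sigma_e(H^*_\psi H_\psi)$, which contains an open interval.

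The only step needing justification beyond bookkeeping is the standard fact that a non-holomorphic symbol continuous up to the boundary induces a nonzero Hankel operator: if $H^{\mathbb{D}^{n-1}}_\varphi$ vanished, applying it to the constant $1$ would place $\varphi\in A^2(\mathbb{D}^{n-1})$, and continuity up to the boundary would then force $\varphi$ to coincide with a holomorphic function, contradicting the hypothesis. Everything else is a mechanical translation of the hypotheses on $\chi$ and $\varphi$ into the norm non-constancy (for Theorem \ref{ThmGenSym}) or the spectrum non-triviality (for Theorem \ref{ThmProdSym}) required by the preceding results.
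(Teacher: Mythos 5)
Your proposal is correct and follows essentially the same two routes the paper itself uses: the main argument via Theorem \ref{ThmGenSym} (using $\|H^{\mathbb{D}^{n-1}}_{\psi_q}\|=|\chi(q)|\,\|H^{\mathbb{D}^{n-1}}_\varphi\|$ and $H^{\mathbb{D}^{n-1}}_\varphi 1\neq 0$) matches the paper's proof, and your alternative via Theorem \ref{ThmProdSym} matches the remark that follows it. The only cosmetic point is that the appeal to continuity up to the boundary in your last paragraph is unnecessary, since $\varphi=P\varphi\in A^2(\mathbb{D}^{n-1})$ already makes $\varphi$ holomorphic.
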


\begin{example}
Let $\psi(z_1,z_2)=\zb_1(\zb_2+1)$. Since $\zb_1$ is not holomorphic and $|\zb_2+1|$ is 
not constant on the unit circle, Corollary \ref{CorProd2Sym} implies that 
$\sigma_e(H^*_{\psi}H_{\psi})$ contains an interval. It is worth noting that  
(see Theorem \ref{ThmMonSym} below) the spectrum of the Hermitian square of a Hankel 
operator with a monomial symbol is a discrete set; yet for the a quadratic polynomial 
$\zb_1(\zb_2+1)$ the spectrum contains an interval.
\end{example}

In Theorem \ref{ThmMonSym} below, we give a complete characterization of the 
 spectrum on the polydisc for monomial symbols. 
 
We note that $\mathbb{N}_0$ denotes the set of non-negative integers, $\{0,1,2,\ldots\}$.
For $\mathbf{m},\mathbf{n},\Balpha\in \mathbb{N}^n_0$ and 
$\emptyset\neq B\subset B_n=\{1,2,3,\ldots, n\}$ we define 
\begin{align*}
\lambda_{\mathbf{n},\mathbf{m},\Balpha,B}=\begin{cases}
\prod_{k\in B}\frac{\alpha_k+1}{\alpha_k+n_k+m_k+1}, 
&  \alpha_k< m_k-n_k \text{ for some } k\in B \\
\prod_{k\in B}\frac{\alpha_k+1}{\alpha_k+n_k+m_k+1}
-\prod_{k\in B}\frac{(\alpha_k+1)(\alpha_k+n_k-m_k+1)}{(\alpha_k+n_k+1)^2}, 
&  \alpha_k\geq m_k-n_k \text{ for all } k\in B
\end{cases}.
\end{align*}

\begin{theorem}  \label{ThmMonSym} 
Let $\psi(z)=z^{\mathbf{n}}\zb^{\mathbf{m}}$ for some $\mathbf{m},\mathbf{n}\in \mathbb{N}^n_0$. 
Then $H^*_{\psi}H_{\psi}$ on $A^2(\mathbb{D}^n)$  has the following spectrum 
\[\sigma(H^*_{\psi}H_{\psi})=\{0\}\cup \{\lambda_{\mathbf{n},\mathbf{m},\Balpha,B}: 
\Balpha\in \mathbb{N}_0^n, \emptyset\neq B\subset B_n\}.\]
Furthermore, if $n_k+m_k\geq 1$ for all $k\in B_n$ and $m_k\geq 1$ for some $k\in B_n$ 
then all of the eigenvalues have finite multiplicities. On the other hand, if $n_k+m_k=0$ 
for some $k\in B_n$ then all of the eigenvalues have infinite multiplicities.
\end{theorem}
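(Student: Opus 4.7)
The plan is to diagonalize $H_\psi^* H_\psi$ against the standard orthogonal basis of monomials $\{z^\Balpha : \Balpha \in \mathbb{N}_0^n\}$ of $A^2(\mathbb{D}^n)$. On the polydisc the Bergman projection of a mixed monomial factors coordinatewise: $P^{\mathbb{D}^n}(z^{\mathbf{a}}\bar z^{\mathbf{b}})$ equals $\prod_k \frac{a_k - b_k + 1}{a_k+1}\, z^{\mathbf{a}-\mathbf{b}}$ when $a_k \geq b_k$ for every $k$ and vanishes otherwise. Applying this to $H_\psi z^\Balpha = z^{\Balpha+\mathbf{n}}\bar z^{\mathbf{m}} - P^{\mathbb{D}^n}(z^{\Balpha+\mathbf{n}}\bar z^{\mathbf{m}})$ gives a two-case formula split by whether $\alpha_k + n_k \geq m_k$ for every $k$. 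Because each $H_\psi z^\Balpha$ carries the common factor $\bar z^{\mathbf{m}}$, the angular integrals appearing in $\langle H_\psi z^\Balpha, H_\psi z^\Bbeta\rangle$ vanish unless $\Balpha = \Bbeta$; thus the monomials diagonalize $H_\psi^* H_\psi$ with eigenvalues $\lambda_\Balpha = \|H_\psi z^\Balpha\|^2 / \|z^\Balpha\|^2$. Using $\|z^{\mathbf{a}}\bar z^{\mathbf{b}}\|^2 = \prod_k \pi/(a_k+b_k+1)$ together with the explicit projection above, a direct computation identifies $\lambda_\Balpha$ with $\lambda_{\mathbf{n},\mathbf{m},\Balpha,B_n}$.

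Since the monomials span $A^2(\mathbb{D}^n)$, the spectrum of the bounded self-adjoint operator $H_\psi^* H_\psi$ equals the closure of this eigenvalue set. To identify the closure with the set claimed in the theorem, I would examine all subsequential limits of $\lambda_\Balpha$: after passing to a subsequence each coordinate $\alpha_k$ is either eventually constant or tends to $\infty$. Letting $B$ be the set of indices where it remains bounded and using that the factors $\frac{\alpha+1}{\alpha+n_k+m_k+1}$ and $\frac{(\alpha+1)(\alpha+n_k-m_k+1)}{(\alpha+n_k+1)^2}$ both tend to $1$ as $\alpha\to\infty$, the limit equals $\lambda_{\mathbf{n},\mathbf{m},\Balpha,B}$ when $B \neq \emptyset$ and equals $0$ when $B = \emptyset$. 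This exhibits the closure as the union in the theorem statement.

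For the multiplicity statement, the case $n_k + m_k = 0$ for some $k$ is immediate: then $\psi$ does not depend on $z_k$, neither does $\lambda_\Balpha$, and each eigenvalue is attained on infinitely many multi-indices. For the finite-multiplicity direction I would exploit the telescoping identity
\[
\frac{\alpha+1}{\alpha+n_k+m_k+1} - \frac{(\alpha+1)(\alpha+n_k-m_k+1)}{(\alpha+n_k+1)^2} = \frac{m_k^2(\alpha+1)}{(\alpha+n_k+m_k+1)(\alpha+n_k+1)^2},
\]
which shows that the two competing products in the Case~1 formula strictly differ at the $k$-th factor exactly when $m_k \geq 1$, and then combine this with the strict monotonicity of each single-variable factor to bound the coordinates of any $\Balpha$ with $\lambda_\Balpha$ equal to a prescribed positive value. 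The main technical obstacle is this last step: while the monotonicity makes the finite-fiber property transparent in one variable, turning it into a genuine bound in all $n$ variables — and in particular carefully handling the interaction between the two products in Case~1 when only some of the $m_k$ vanish — requires a delicate combinatorial argument. All remaining steps reduce to standard, if somewhat tedious, computations with the polydisc Bergman projection formula.
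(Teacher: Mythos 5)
Your diagonalization of $H^*_{\psi}H_{\psi}$ on the monomial basis, the resulting eigenvalue formula $\lambda_{\Balpha}=\|H_{\psi}z^{\Balpha}\|^2/\|z^{\Balpha}\|^2$, and the identification of the spectrum as the closure of the eigenvalue set via subsequential limits are correct and follow essentially the same route as the paper (which computes $H^*_{\psi}H_{\psi}z^{\Balpha}$ directly from the identity $H^*_uH_v=PM_{\overline{u}}(I-P)M_v$; your angular-orthogonality shortcut is an equivalent way to reach the same two-case formula). The infinite-multiplicity case $n_k+m_k=0$ is also handled correctly and as in the paper.

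The genuine gap is the finite-multiplicity claim, which you explicitly leave open. The tools you name would not close it: strict monotonicity of each single-variable factor does not bound the level sets of a function of $n$ variables (compare $f(x,y)=(x+1)/(y+1)$ on $\mathbb{N}_0^2$, strictly monotone in each variable yet constant on the infinite diagonal), and your telescoping identity controls the difference of \emph{individual} factors, whereas the eigenvalue in the main case is a difference of two \emph{products} over all $n$ coordinates; prescribing the value of that difference bounds no individual coordinate. What the paper supplies here is an argument by contradiction that is genuinely asymptotic rather than combinatorial: if $\lambda_{\mathbf{n},\mathbf{m},\Balpha(j),B_n}=\lambda\neq 0$ for infinitely many distinct $\Balpha(j)$, pass to a subsequence with $\alpha_k(j)=\alpha_k$ fixed for $k\in B$ and $\alpha_k(j)\to\infty$ for $k\notin B$ (so $B\neq B_n$). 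If $\alpha_k<m_k-n_k$ for some $k\in B$, then since $n_k+m_k\geq 1$ for all $k$ each factor indexed by $k\notin B$ is strictly less than $1$, so $\lambda_{\Balpha(j)}<\lambda$ for every $j$, contradicting $\lambda_{\Balpha(j)}=\lambda$. Otherwise one clears denominators in $\lambda_{\Balpha(j)}=\lambda$, writes $a,b$ for the two partial products over $B$, and examines the leading behavior as $j\to\infty$: if $m_k\geq 1$ for some $k\in B$ then $a>b$ and the term $(a-b)\prod_{k\notin B}(\alpha_k(j)+n_k+1)^2\bigl(\prod_{k\notin B}(\alpha_k(j)+n_k+m_k+1)-\prod_{k\notin B}(\alpha_k(j)+1)\bigr)$ tends to $+\infty$ while the remaining term is of lower order; if instead $m_k=0$ for all $k\in B$ then $a=b$, some $m_k\geq 1$ with $k\notin B$, and the surviving expression is a positive multiple of $\prod_{k\notin B}\bigl((\alpha_k(j)+n_k+1)^2-m_k^2\bigr)-\prod_{k\notin B}(\alpha_k(j)+n_k+1)^2$, which is strictly negative. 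Either way the identity fails, so no nonzero eigenvalue can have infinite multiplicity. This is precisely the step your proposal is missing, and it is where the hypothesis that $m_k\geq 1$ for some $k$ actually enters.
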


Since the spectrum of $H^*_{\psi}H_{\psi}$ on $A^2(\mathbb{D}^n)$ for 
$\psi(z)=z^{\mathbf{n}}\zb^{\mathbf{m}}$ contains countably many points 
it has empty interior. 
\begin{corollary}
Let $\psi(z)=z^{\mathbf{n}}\zb^{\mathbf{m}}$ for some $\mathbf{m},\mathbf{n}\in \mathbb{N}^n_0$. 
Then the spectrum of $H^*_{\psi}H_{\psi}$ on $A^2(\mathbb{D}^n)$ has empty interior.
\end{corollary}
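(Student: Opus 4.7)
The plan is to read the corollary as an immediate consequence of Theorem \ref{ThmMonSym}. That theorem describes
\[\sigma(H^*_{\psi}H_{\psi})=\{0\}\cup\{\lambda_{\mathbf{n},\mathbf{m},\Balpha,B}:\Balpha\in\mathbb{N}_0^n,\ \emptyset\neq B\subset B_n\},\]
so I would first record that the index set is countable: $\mathbb{N}_0^n$ is a finite Cartesian product of countable sets and therefore countable, while the collection of nonempty subsets $B$ of $B_n=\{1,2,\ldots,n\}$ is finite, with exactly $2^n-1$ elements. Consequently the spectrum is a countable subset of $[0,\infty)$, since $H^*_{\psi}H_{\psi}$ is a positive self-adjoint operator on $A^2(\mathbb{D}^n)$.

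The second step is to invoke the general topological fact that a countable subset of $\mathbb{R}$ has empty interior: any nonempty open interval is uncountable, so it cannot be contained in a countable set. Applying this to the countable set obtained above finishes the argument. I do not expect any genuine obstacle here; the entire content of the corollary is the bookkeeping observation that the spectrum, as enumerated in Theorem \ref{ThmMonSym}, is indexed by a countable set, from which the empty-interior conclusion is automatic.
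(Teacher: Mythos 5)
Your argument is correct and is exactly the paper's: the authors note that the spectrum described in Theorem \ref{ThmMonSym} consists of countably many points (indexed by $\mathbb{N}_0^n$ and the finitely many nonempty $B\subset B_n$), hence has empty interior. No differences to report.
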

In case the symbol is a pure conjugate holomorphic monomial in $\C^2$, we have the 
following corollary.
\begin{corollary}\label{CorMonSym}
Let $\psi(z_1,z_2)=\zb_1^n\zb_2^m$  on $\mathbb{D}^2$ for some positive integers  $n,m$.  Then 
\begin{align*}
\sigma(H^*_{\psi}H_{\psi})=\{0\}\cup&\bigcup_{\alpha_1=0}^{n-1}
\bigcup_{\alpha_2=0}^{m-1}\left\{\frac{\alpha_1+1}{\alpha_1+n+1},
\frac{\alpha_2+1}{\alpha_2+m+1}\right\} \\ 
&\bigcup_{\alpha_1=n}^{\infty}\bigcup_{\alpha_2=m}^{\infty}
\left\{\frac{n^2}{(\alpha_1+n+1)(\alpha_1+1)},\frac{m^2}{(\alpha_2+m+1)(\alpha_2+1)}\right\}\\
&\bigcup_{\alpha_1=n}^{\infty}\bigcup_{\alpha_2=m}^{\infty}
\left\{\frac{n^2(\alpha_2+1)^2+m^2(\alpha_1+1)^2-n^2m^2}{(\alpha_1+n+1)(\alpha_2+m+1)
(\alpha_1+1)(\alpha_2+1)}\right\}\\ 
&\bigcup_{\alpha_1=0}^{n-1}\bigcup_{\alpha_2=0}^{\infty}
\left\{\frac{(\alpha_1+1)(\alpha_2+1)}{(\alpha_1+n+1)(\alpha_2+m+1)}\right\} 
\bigcup_{\alpha_1=0}^{\infty}\bigcup_{\alpha_2=0}^{m-1}
\left\{\frac{(\alpha_1+1)(\alpha_2+1)}{(\alpha_1+n+1)(\alpha_2+m+1)}\right\}.
\end{align*}
Furthermore, all of the eigenvalues are of finite multiplicity. 
\end{corollary}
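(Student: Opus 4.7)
The plan is to specialize Theorem \ref{ThmMonSym} to the setting $n=2$ (dimension), $\mathbf{n}=(0,0)$, and $\mathbf{m}=(n,m)$, where the integers $n,m\geq 1$ of the corollary now play the role of the components of the multi-index $\mathbf{m}$. Since $n_k=0$ for each $k$, the inequality $\alpha_k<m_k-n_k$ simplifies to $\alpha_k<m_k$, and the factor $(\alpha_k+n_k-m_k+1)/(\alpha_k+n_k+1)^2$ appearing in the second case of $\lambda_{\mathbf{n},\mathbf{m},\Balpha,B}$ reduces to $(\alpha_k-m_k+1)/(\alpha_k+1)^2$. With $B_2=\{1,2\}$, the only non-empty subsets to consider are $B=\{1\}$, $B=\{2\}$, and $B=\{1,2\}$, so the whole spectrum is the union of five explicit families, which I would identify one by one with the five pieces displayed in the corollary.

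For $B=\{1\}$, the value of $\lambda$ depends only on $\alpha_1$, and splitting the two cases gives exactly the family $\tfrac{\alpha_1+1}{\alpha_1+n+1}$ for $0\leq\alpha_1\leq n-1$ and, after the elementary identity $(\alpha_1+1)^2-n^2=(\alpha_1-n+1)(\alpha_1+n+1)$, the family $\tfrac{n^2}{(\alpha_1+n+1)(\alpha_1+1)}$ for $\alpha_1\geq n$. The subset $B=\{2\}$ yields the analogous expressions with $m$ and $\alpha_2$. These two account for the first two double unions in the statement, where the spurious product indexing over $\alpha_2$ (respectively $\alpha_1$) is harmless because each value does not depend on the other index.

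For $B=\{1,2\}$, in the first case (either $\alpha_1<n$ or $\alpha_2<m$) one gets directly the product $\tfrac{(\alpha_1+1)(\alpha_2+1)}{(\alpha_1+n+1)(\alpha_2+m+1)}$, which splits naturally into the last two double unions of the corollary (one over $\alpha_1\leq n-1$ and any $\alpha_2$, the other over $\alpha_2\leq m-1$ and any $\alpha_1$). In the second case ($\alpha_1\geq n$ and $\alpha_2\geq m$) I would combine the two terms over the common denominator $(\alpha_1+n+1)(\alpha_2+m+1)(\alpha_1+1)(\alpha_2+1)$ and use the two identities $(\alpha_i+1)^2-k^2=(\alpha_i-k+1)(\alpha_i+k+1)$ to produce the numerator $n^2(\alpha_2+1)^2+m^2(\alpha_1+1)^2-n^2m^2$, matching the third union exactly.

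The last assertion about finite multiplicity is immediate from the second half of Theorem \ref{ThmMonSym}: we have $n_k+m_k=m_k\geq 1$ for each $k\in B_2$ and $m_k\geq 1$ for some (indeed every) $k$, since $n,m\geq 1$ by assumption, so all eigenvalues have finite multiplicity. The only real work here is the algebraic simplification in the $B=\{1,2\}$, case 2 step, and that is a short and routine computation rather than a conceptual obstacle; correctly bookkeeping the indexing ranges to match the five unions verbatim is what requires the most care.
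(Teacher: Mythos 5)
Your proposal is correct and is exactly the intended argument: the paper states this corollary as a direct specialization of Theorem \ref{ThmMonSym} (with $\mathbf{n}=(0,0)$, $\mathbf{m}=(n,m)$, and the three non-empty subsets $B\subset\{1,2\}$) without writing out a separate proof, and your case-by-case identification of the five families, the algebraic simplifications via $(\alpha_i+1)^2-k^2=(\alpha_i-k+1)(\alpha_i+k+1)$, and the appeal to the finite-multiplicity clause of the theorem all check out.
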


A precursor to the next corollary has appeared in \cite[Remark 4.2]{CuckovicSahutoglu18}.
\begin{corollary}\label{CorMonSym2}
Let $\psi(z_1,z_2)=\zb_1^n$ on $\mathbb{D}^2$ for some positive integer $n$.  Then 
\begin{align*}
\sigma(H^*_{\psi}H_{\psi})=\{0\}\cup\bigcup_{\alpha_1=0}^{n-1}
\left\{\frac{\alpha_1+1}{\alpha_1+n+1}\right\} \bigcup_{\alpha_1=n}^{\infty}
\left\{\frac{n^2}{(\alpha_1+n+1)(\alpha_1+1)}\right\}. 
\end{align*}
Furthermore, all of the eigenvalues are of infinite multiplicity.
\end{corollary}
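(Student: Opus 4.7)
The plan is to derive Corollary \ref{CorMonSym2} as a direct specialization of Theorem \ref{ThmMonSym}. Setting $\psi(z_1,z_2)=\zb_1^n$ corresponds to taking $\mathbf{n}=(0,0)$ and $\mathbf{m}=(n,0)$ in the notation of Theorem \ref{ThmMonSym}, with $B_n=\{1,2\}$. So first I would substitute these values of $\mathbf{n},\mathbf{m}$ into the definition of $\lambda_{\mathbf{n},\mathbf{m},\Balpha,B}$ and evaluate it for each of the three nonempty subsets $B\subset\{1,2\}$.

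The case $B=\{2\}$ is trivial: since $n_2=m_2=0$, the inequality $\alpha_2\geq m_2-n_2=0$ always holds, placing us in the second branch, and both products over the singleton $\{2\}$ equal $1$, so $\lambda_{\mathbf{n},\mathbf{m},\Balpha,\{2\}}=0$. For $B=\{1\}$, the first branch ($\alpha_1<n$) yields $\frac{\alpha_1+1}{\alpha_1+n+1}$, while the second branch ($\alpha_1\geq n$) yields, after a one-line simplification using the identity $(\alpha_1+1)^2-(\alpha_1-n+1)(\alpha_1+n+1)=n^2$, the value $\frac{n^2}{(\alpha_1+n+1)(\alpha_1+1)}$. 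For $B=\{1,2\}$, the extra $k=2$ factor in each product is $1$ (because $n_2=m_2=0$ forces both $\frac{\alpha_2+1}{\alpha_2+1}=1$ and $\frac{(\alpha_2+1)(\alpha_2+1)}{(\alpha_2+1)^2}=1$), so $\lambda_{\mathbf{n},\mathbf{m},\Balpha,\{1,2\}}$ coincides with $\lambda_{\mathbf{n},\mathbf{m},\Balpha,\{1\}}$.

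Taking the union of these three families and combining with $\{0\}$ from Theorem \ref{ThmMonSym} produces exactly the set claimed in Corollary \ref{CorMonSym2}. Finally, the multiplicity statement comes for free from the second clause of Theorem \ref{ThmMonSym}: since $n_2+m_2=0$ for some index in $B_n$, that clause guarantees every eigenvalue has infinite multiplicity.

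There is no real obstacle here; the computation is a routine bookkeeping exercise. The only place where a small pitfall could appear is in the $B=\{1,2\}$ case, where one must check carefully that the "mixed" scenario $\alpha_1<n$ with $\alpha_2\geq 0$ still falls in the first branch of the definition of $\lambda_{\mathbf{n},\mathbf{m},\Balpha,B}$ (because the first branch is triggered as soon as \emph{some} $k\in B$ satisfies $\alpha_k<m_k-n_k$), so that the products over $B=\{1,2\}$ and $B=\{1\}$ agree and no new spectral values are introduced.
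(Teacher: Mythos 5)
Your proposal is correct and is exactly the argument the paper intends: the corollary is stated as a direct specialization of Theorem \ref{ThmMonSym} with $\mathbf{n}=(0,0)$, $\mathbf{m}=(n,0)$ (the paper gives no separate proof), and your case analysis over $B\in\{\{1\},\{2\},\{1,2\}\}$, the simplification $(\alpha_1+1)^2-(\alpha_1-n+1)(\alpha_1+n+1)=n^2$, and the appeal to the infinite-multiplicity clause via $n_2+m_2=0$ all check out.
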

\section{Proof of Theorems \ref{ThmProdSym} and  \ref{ThmGenSym}}\label{SecProof1}

In the proofs below $K^{\D}$ and $k^{\D}_z=K^{\D}_z/\|K^{\D}_z\|$ denote the Bergman 
kernel and the normalized Bergman kernel of $\D$, respectively. We will drop the 
superscript $\D$ when the domain is clear. 
 
To prove Theorems \ref{ThmProdSym} and \ref{ThmGenSym} we will need the following 
result. We refer the reader to Theorem 5.10 and Theorem 7.2 in \cite{HislopBook} for a proof.

\begin{theoremA}[Weyl's Criterion]\label{ThmWeyl} 
Let $A$ be a bounded self-adjoint linear operator on a Hilbert space $H$. Then 
\begin{itemize} 
\item[i.]  $\lambda$ is in the spectrum of $A$ if and only if 
there exists a sequence $\{u_n\}\subset H$ such that 
$\|u_n\|=1$ for all $n$ and $\|(A-\lambda )u_n\|\to 0$ as $n\to \infty$. 
\item [ii.] $\lambda$ is in the essential spectrum of $A$ if and only if 
there exists a sequence $\{u_n\}\subset H$ such that $\|u_n\|=1$ for all 
$n$, $u_n\to 0$ weakly, and $\|(A-\lambda )u_n\|\to 0$ as $n\to \infty$. 
\end{itemize} 
\end{theoremA}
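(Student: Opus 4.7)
The plan is to deduce both statements from the spectral theorem for bounded self-adjoint operators, writing $A=\int t\,dE(t)$ where $E$ is the projection-valued spectral measure on $\mathbb{R}$ supported on $\sigma(A)$. For part (i), the ``$\Leftarrow$'' direction is immediate: if $\|u_n\|=1$ and $\|(A-\lambda)u_n\|\to 0$ then $A-\lambda$ is not bounded below, so it cannot be invertible and $\lambda\in\sigma(A)$. For the forward direction, since $A$ is self-adjoint any $\lambda\in\sigma(A)$ must be real, so $B:=A-\lambda$ is self-adjoint. If $B$ were bounded below by some $c>0$ it would be injective with closed range; by self-adjointness $\overline{\mathrm{range}(B)}=(\ker B)^\perp=H$, forcing $B$ invertible. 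This contradicts $\lambda\in\sigma(A)$, so $B$ is not bounded below and the required unit sequence $\{u_n\}$ exists.

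For the forward direction of part (ii), I would invoke the characterization that $\lambda\in\sigma_e(A)$ if and only if $E((\lambda-\varepsilon,\lambda+\varepsilon))$ has infinite-dimensional range for every $\varepsilon>0$. Indeed, if some such spectral projection were of finite rank, then $\lambda$ would be either outside $\sigma(A)$ or an isolated eigenvalue of finite multiplicity, i.e.\ $\lambda\in\sigma_d(A)$. Granted this, choose inductively an orthonormal sequence $\{u_n\}$ with $u_n$ in the range of $E((\lambda-1/n,\lambda+1/n))$. Being orthonormal, $u_n\to 0$ weakly, and
\[
\|(A-\lambda)u_n\|^2=\int_{(\lambda-1/n,\lambda+1/n)}(t-\lambda)^2\,d\langle E(t)u_n,u_n\rangle\leq \frac{1}{n^2}\to 0.
\]

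For the reverse direction of (ii), assume such a sequence $\{u_n\}$ is given. Part (i) already yields $\lambda\in\sigma(A)$. Suppose, for contradiction, that $\lambda\in\sigma_d(A)$: then $\lambda$ is isolated, so there is $\delta>0$ with $\sigma(A)\cap(\lambda-\delta,\lambda+\delta)=\{\lambda\}$, and the spectral projection $P=E(\{\lambda\})$ onto the eigenspace has finite rank. Functional calculus gives $\|(A-\lambda)v\|\geq\delta\|v\|$ for $v\in\mathrm{range}(I-P)$. Splitting $u_n=Pu_n+(I-P)u_n$ yields
\[
\|(A-\lambda)u_n\|=\|(A-\lambda)(I-P)u_n\|\geq\delta\|(I-P)u_n\|,
\]
so $\|(I-P)u_n\|\to 0$ and hence $\|Pu_n\|\to 1$. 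But $P$ is compact, so weak convergence $u_n\to 0$ implies $Pu_n\to 0$ in norm, a contradiction.

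The main obstacle I anticipate is bridging the set-theoretic definition of $\sigma_d$ (isolated eigenvalues of finite algebraic multiplicity) and the measure-theoretic characterization via spectral projections of small intervals. Specifically, one must justify that a gap in $\sigma(A)$ around $\lambda$ forces $E((\lambda-\varepsilon,\lambda+\varepsilon))$ to coincide with the eigenprojection $E(\{\lambda\})$ for small $\varepsilon$, and that its finite-rankness is equivalent to the eigenvalue having finite multiplicity. This is the step that requires the full strength of the spectral theorem rather than a soft operator-theoretic argument.
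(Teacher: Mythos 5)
Your argument is correct. Note first that the paper does not prove Theorem A at all: it is quoted as a known result with a pointer to Theorems 5.10 and 7.2 of Hislop--Sigal, so there is no in-paper proof to compare against; your write-up is essentially the standard spectral-theorem proof found there. All four implications check out: the soft operator-theoretic argument for (i) (bounded below plus self-adjoint forces dense closed range, hence invertibility) is complete; the forward direction of (ii) correctly reduces to the fact that $\lambda\in\sigma_e(A)$ forces $E\bigl((\lambda-\varepsilon,\lambda+\varepsilon)\bigr)$ to have infinite rank, and the orthonormal Weyl sequence built from nested spectral subspaces does the job; the reverse direction of (ii) via the finite-rank eigenprojection $P=E(\{\lambda\})$ and the lower bound $\|(A-\lambda)v\|\geq\delta\|v\|$ on $\mathrm{range}(I-P)$ is the standard contradiction. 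The ``obstacle'' you flag at the end is real but routine: since $E$ is supported on $\sigma(A)$, a spectral gap gives $E\bigl((\lambda-\varepsilon,\lambda+\varepsilon)\bigr)=E(\{\lambda\})$ for small $\varepsilon$, and if $E\bigl((\lambda-\varepsilon,\lambda+\varepsilon)\bigr)$ has finite rank then $A$ restricted to its range has finite spectrum, so $\lambda$ is either outside $\sigma(A)$ or an isolated eigenvalue whose eigenspace sits inside that finite-dimensional range; finally, for self-adjoint operators algebraic and geometric multiplicities agree, which reconciles your use of $\mathrm{rank}\,E(\{\lambda\})$ with the paper's definition of $\sigma_d$ via algebraic multiplicity. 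Spelling those three sentences out would make the proof fully self-contained.
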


The following lemma is probably well known. We include it here for completeness. 
We would like to thank Tomas Miguel P. Rodriguez for the proof. 
\begin{lemma}\label{LemaWeakConv} 
Let $\D$ be a domain in $\C^n$ and $\{f_j\}$ be a sequence in $A^2(\D)$. Then $f_j\to 0$ 
weakly as $j\to \infty$ if and only if $\{f_j\}$ is bounded in $A^2(\D)$ and $f_j\to 0$ as $j\to \infty$
uniformly on compact subsets in $\D$. 
\end{lemma}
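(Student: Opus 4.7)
The plan is to handle the two implications separately, with the reproducing kernel providing the bridge between weak convergence in $A^2(\Omega)$ and pointwise values.

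For the forward direction, assume $f_j\to 0$ weakly. I would first invoke the uniform boundedness principle to conclude that $\{\|f_j\|\}$ is bounded. Next, using the reproducing property $f_j(z)=\langle f_j,K^{\Omega}_z\rangle$, weak convergence immediately yields $f_j(z)\to 0$ pointwise for every $z\in\Omega$. To upgrade this to uniform convergence on compact subsets, I would note that on any compact $E\subset\Omega$ the quantity $\|K^{\Omega}_z\|=\sqrt{K^{\Omega}(z,z)}$ is bounded (since $K^{\Omega}(\cdot,\cdot)$ is continuous on $\Omega\times\Omega$), so the Cauchy--Schwarz inequality $|f_j(z)|\le\|f_j\|\,\|K^{\Omega}_z\|$ gives a uniform sup-norm bound on $E$. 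Hence $\{f_j\}$ is a normal family in the sense of Montel. Any subsequence has a further subsequence converging locally uniformly, necessarily to $0$ by the pointwise limit, so the whole sequence converges to $0$ uniformly on compacta.

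For the reverse direction, assume $\{f_j\}$ is bounded in $A^2(\Omega)$, say $\|f_j\|\le M$, and $f_j\to 0$ uniformly on compact subsets. Given $g\in A^2(\Omega)$ and $\varepsilon>0$, I would pick a compact $E\subset\Omega$ large enough that $\|g\|_{L^2(\Omega\setminus E)}<\varepsilon$, then split
\[
|\langle f_j,g\rangle|\le \int_E |f_j||g|\,dV+\int_{\Omega\setminus E}|f_j||g|\,dV.
\]
The first term is at most $\|f_j\|_{L^\infty(E)}\,\mathrm{vol}(E)^{1/2}\|g\|$, which tends to $0$ by uniform convergence on $E$, while the second is at most $M\varepsilon$ by Cauchy--Schwarz. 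Letting $j\to\infty$ and then $\varepsilon\to 0$ yields $\langle f_j,g\rangle\to 0$, i.e.\ weak convergence to $0$.

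I do not foresee a genuine obstacle; the only technical point worth care is justifying the local $L^\infty$ bound on $E$ in the forward direction, which relies on continuity of the Bergman kernel on $\Omega\times\Omega$ and hence the local boundedness of $z\mapsto K^{\Omega}(z,z)$ on $\Omega$. Everything else is a standard split-and-estimate argument using $L^2$ boundedness plus compact exhaustion of $\Omega$.
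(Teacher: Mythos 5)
Your proposal is correct and follows essentially the same route as the paper: the uniform boundedness principle plus the reproducing property and a Montel/subsequence argument for one direction, and a compact-exhaustion split-and-estimate for the other. The only (welcome) difference is that you explicitly justify local boundedness of the family via $|f_j(z)|\le\|f_j\|\,\|K^{\Omega}_z\|$ before invoking Montel, a point the paper's proof leaves implicit.
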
 
\begin{proof} 
Let us assume that $\{f_j\}$ is bounded in $A^2(\D)$ and $f_j\to 0$ as $j\to \infty$ uniformly 
on compact subsets in $\D$. Let $f\in A^2(\D)$ and $\ep>0$. Then there exists a compact set 
$K\subset \D$ such that $\|f\|_{L^2(\D\setminus K)}<\ep$. Then for large $j$  we have 
$\sup\{|f_j(z)|:z\in K\}<\ep$ and 
\begin{align*}
	|\langle f, f_j\rangle| 
	\leq \, & \|f\|_{L^2(K)}\| f_j\|_{L^2(K)}+\|f\|_{L^2(\D\setminus K)} \|f_j\|_{L^2(\D)} \\
	\leq \,&  \ep\left(\|f\|_{L^2(K)}\sqrt{V(K)}+\sup\{\|f_j\|_{L^2(\D)}:j\in \mathbb{N}\}\right) 
\end{align*}
where $V(K)$ denotes the Lebesgue volume of $K$. Hence $\langle f, f_j\rangle\to 0$. 
That is, $f_j\to 0$ weakly as $j\to \infty$. 

For the converse, we assume that  $f_j\to 0$ weakly as $j\to \infty$. We define 
$S_{f_j}(f)=\langle f,f_j\rangle$ for $f\in A^2(\D)$. Then  $S_{f_j}(f)\to 0$ as $j\to \infty$. That is 
$\sup \{|S_{f_j}(f)|:j\in \mathbb{N}\}<\infty$ for all $f\in A^2(\D)$. Then by the 
uniform boundedness principle 
$\sup \{\|S_{f_j}\|:j\in \mathbb{N}\}=\sup \{ \|f_j\|:j\in \mathbb{N}\}<\infty$. That is $\{f_j\}$ 
is bounded in $A^2(\D)$. Furthermore, $f_j(z)=\langle f_j, K_z\rangle\to 0$ as $j\to \infty$  
for all $z\in \D$. We will use this fact to conclude that $f_j\to 0$ uniformly on 
compact subsets as follows. Let $K$ be a compact subset of $\D$ and  
$\{f_{j_k}\}$ be a subsequence  of $\{f_j\}$. Then Montel's theorem implies that there is a 
further subsequence $\{f_{j_{k_l}}\}$ that is convergent to a holomorphic function $f$ uniformly on $K$. 
However, $f_j(z)=\langle f_j, K_z\rangle\to 0$ as $j\to \infty$. Then $f=0$. Therefore,  every subsequence 
$\{f_{j_k}\}$ has a further  subsequence  $\{f_{j_{k_l}}\}$ converging to zero uniformly on $K$. 
Then $f_j\to 0$ uniformly on $K$ as $j\to \infty$. Since $K$ is arbitrary we conclude that 
$f_j\to 0$ uniformly on compact subsets. 
\end{proof} 

\begin{proof}[Proof of Theorem \ref{ThmProdSym}] 
Let $ q\in b\mathbb{D}$. For $p\in\mathbb{D}$ and $j\in \mathbb{N}$ we define 
\[f_{j,p}(z',z_n)=g_j(z')k^{\mathbb{D}}_p(z_n)\] 
where $z'=(z_1,\ldots,z_{n-1}), k^{\mathbb{D}}_p$ is the normalized kernel for $A^2(\mathbb{D})$ 
centered at $p$ and $\{g_j\}\subset A^2(\mathbb{D}^{n-1})$, to be determined later, such that 
$\|g_j\|_{L^2(\mathbb{D}^{n-1})}=1$. We note that $\{g_j\}$ is uniformly bounded 
on compact subsets of $\mathbb{D}^{n-1}$. Then $\sup\{|f_{j,p}(z)|:j\in\mathbb{N},z\in K\} \to 0$ 
as  $p\to q$ for any compact set $K\subset \mathbb{D}^n$. Hence, by Lemma \ref{LemaWeakConv}, 
for any $p_j\to q$ we conclude that $f_{j,p_j}\to 0$ weakly $j\to\infty$.

Let us fix $j$ and denote $\psi_q(z',z_n)=\psi(z',q)=\chi(q)\varphi(z')$. 
Since $\psi_q$ is independent of $z_n$ and 
\[P(fg)(z)=(P^{\mathbb{D}^{n-1}}f)(z')\cdot (P^{\mathbb{D}}g)(z_n)\] 
whenever $f$ is a function of $z'$ and $g$ is a function of $z_n$, 
we have  
\begin{align} \nonumber 
H_{\psi_q}f_{j,p}
= \, & \psi_q g_jk^{\mathbb{D}}_p-P(\psi_qg_jk^{\mathbb{D}}_p) \\
\label{Eqn1}= \,& \chi(q)\varphi g_j k^{\mathbb{D}}_p
- \chi(q)\left(P^{\mathbb{D}^{n-1}}(\varphi g_j)\right)k^{\mathbb{D}}_p \\
\nonumber =\, &\chi(q)(H^{\mathbb{D}^{n-1}}_{\varphi}g_j)k^{\mathbb{D}}_p.
\end{align}
Then $\|H_{\psi_q}f_{j,p}\| 
=|\chi(q)|\|H^{\mathbb{D}^{n-1}}_{\varphi}g_j\|_{L^2(\mathbb{D}^{n-1})}.$ Now we 
write $\psi=\psi-\psi_q+\psi_q$ and 
\[H^*_{\psi}H_{\psi}f_{j,p}=H^*_{\psi_q}H_{\psi_q}f_{j,p} 
+H^*_{\psi}H_{\psi-\psi_q}f_{j,p} +H^*_{\psi-\psi_q}H_{\psi_q}f_{j,p}.\]
Hence for $\lambda\in\mathbb{R}$ we have 
\begin{align} \nonumber 
\left\|H^*_{\psi_q}H_{\psi_q}f_{j,p}-\lambda f_{j,p}\right\|
-&\left\|H^*_{\psi}H_{\psi-\psi_q}f_{j,p} +H^*_{\psi-\psi_q}H_{\psi_q}f_{j,p} \right\| \\
\label{Eqn2} \leq\,& \left\|H^*_{\psi}H_{\psi}f_{j,p}-\lambda f_{j,p}\right\| \\
 \nonumber \leq\, & \left\|H^*_{\psi_q}H_{\psi_q}f_{j,p}-\lambda f_{j,p}\right\|\\
\nonumber &+\left\|H^*_{\psi}H_{\psi-\psi_q}f_{j,p} +H^*_{\psi-\psi_q}H_{\psi_q}f_{j,p} \right\|. 
\end{align}

Let $\{h_j\}$ be a bounded sequence in $L^2(\mathbb{D}^{n-1})$. Then 
$\|(\psi-\psi_q)h_jk^{\mathbb{D}}_p\|\to 0$ as $p\to q$ for any fixed $j$. This  
can be seen as follows. By continuity of $\psi$, for $\ep>0$ there exists $\delta>0$ such that 
\[|\psi(z',z_n)-\psi_q(z',z_n)|<\ep  \text{ for } |z_n-q|<\delta.\] 
Then  we have 
\begin{align*}  
 &\left\|(\psi-\psi_q) h_jk^{\mathbb{D}}_p\right\|^2 \\
=\,& \left\|(\psi-\psi_q)h_jk^{\mathbb{D}}_p\right\|^2_{L^2(\{(z',z_n)\in \mathbb{D}^n: |z_n-q|< \delta\})}  
+\left\|(\psi-\psi_q)h_jk^{\mathbb{D}}_p\right\|^2_{L^2(\{(z',z_n)\in \mathbb{D}^n: |z_n-q|\geq \delta\})}\\
 \leq\, & \ep^2 \|h_j\|^2_{L^2(\mathbb{D}^{n-1})}\|k^{\mathbb{D}}_p\|^2_{L^2(\mathbb{D})} \\
&+ \pi \|h_j\|^2_{L^2(\mathbb{D}^{n-1})}  \sup\left\{\left|(\psi(z',z_n)-\psi_q(z',z_n))k^{\mathbb{D}}_p(z_n)\right|^2: 
	z'\in \mathbb{D}^{n-1}, |z_n-q|\geq \delta\right\}. 
\end{align*}
However, 
\[\sup\left\{\left|k^{\mathbb{D}}_p(z_n)\right|:|z_n-q|\geq \delta\right\} \to 0 \text{ as } p\to q.\] 
Then, for any $j$ we have $\limsup_{p\to q}\|(\psi-\psi_q) h_jk^{\mathbb{D}}_p\|\leq \ep\|h_j\|$ for all $\ep>0$. 
Therefore,  
\begin{align} \label{Eqn3} 
\lim_{p\to q}\left\|(\psi-\psi_q) h_jk^{\mathbb{D}}_p\right\| =0 \text{ for any } j.  
\end{align}  
This fact together with $h_j=1$ imply that for any $j$ we have  
\[\left\|H^*_{\psi}H_{\psi-\psi_q}f_{j,p}\right\| = 
\left\|H^*_{\psi}(I-P)\Big((\psi-\psi_q)f_{j,p}\Big)\right\|\to 0 \text{ as }p \to q.\] 

Next we will use the following fact (see \cite[Lemma 1]{CuckovicSahutoglu14}) 
\begin{align}\label{EqnHankelProd}
H^*_u H_v =  PM_{\overline{u}}(I-P)M_v \text{ for } u,v\in L^{\infty}(\D)
\end{align} 
where $M_v$ denotes the multiplication operator by $v$.

Since $\mathbb{D}^n$ is a product domain and $\psi_q$ is independent of $z_n$, 
the fact above, \eqref{Eqn1} and \eqref{Eqn3} with $h_j=H^{\mathbb{D}^{n-1}}_{\varphi}g_j$ imply that 
for any $j$ we have 
\[\left\|H^*_{\psi-\psi_q}H_{\psi_q}f_{j,p}\right\|
=|\chi(q)|\left\|P\left((H^{\mathbb{D}^{n-1}}_{\varphi}g_j)(\overline{\psi} 
-\overline{\psi_q})k^{\mathbb{D}}_p\right)\right\|\to 0 \text{ as } p\to q\]
 and 
\[H^*_{\psi_q}H_{\psi_q}f_{j,p}
= |\chi(q)|^2(H^{\mathbb{D}^{n-1*}}_{\varphi}H^{\mathbb{D}^{n-1}}_{\varphi}g_j)k^{\mathbb{D}}_p.\]
Then using \eqref{Eqn2} for any $j$ we have   
\begin{align}\label{EqnLim}  
\left\|H^*_{\psi}H_{\psi}f_{j,p}-\lambda f_{j,p}\right\| 
\to  \left\||\chi(q)|^2H^{\mathbb{D}^{n-1*}}_{\varphi}H^{\mathbb{D}^{n-1}}_{\varphi}g_j 
-\lambda g_j\right\|_{L^2(\mathbb{D}^{n-1})} \text{ as } p\to q. 
\end{align}

Let $\mu\in \sigma(H^{\mathbb{D}^{n-1*}}_{\varphi}H^{\mathbb{D}^{n-1}}_{\varphi})$. Then, 
by i. in Theorem \ref{ThmWeyl}, we choose a sequence $\{g_j\}\subset A^2(\mathbb{D}^{n-1})$ 
such that $\|g_j\|=1$ for all $j$ and 
\[\left\|H^{\mathbb{D}^{n-1*}}_{\varphi}H^{\mathbb{D}^{n-1}}_{\varphi}g_j 
-\mu g_j\right\|_{L^2(\mathbb{D}^{n-1})}\to 0 \text{ as } j\to \infty. \] 
Then for $\lambda=|\chi(q)|^2\mu$ we have 
\[\left\||\chi(q)|^2H^{\mathbb{D}^{n-1*}}_{\varphi}H^{\mathbb{D}^{n-1}}_{\varphi}g_j 
-\lambda g_j\right\|_{L^2(\mathbb{D}^{n-1})}\to 0 \text { as } j\to\infty.\] 
Then  using \eqref{EqnLim}  and the limit above we choose $p_j\in \mathbb{D}$ 
such that $p_j\to q$ as $j\to \infty$ and  
\[\left\|H^*_{\psi}H_{\psi}f_{j,p_j}-\lambda f_{j,p_j}\right\|
< \left\||\chi(q)|^2H^{\mathbb{D}^{n-1*}}_{\varphi}H^{\mathbb{D}^{n-1}}_{\varphi}g_j 
-\lambda g_j\right\|_{L^2(\mathbb{D}^{n-1})}+\frac{1}{j}\] 
 for all  $j$. Hence 
\[\left\|H^*_{\psi}H_{\psi}f_{j,p_j}-\lambda f_{j,p_j}\right\| \to 0 \text{ as } j\to \infty.\] 
Finally, we use ii. in Theorem \ref{ThmWeyl} to conclude that 
$\lambda=|\chi(q)|^2\mu\in \sigma_e(H^*_{\psi}H_{\psi})$ for any $q\in b\mathbb{D}$ 
and $\mu\in \sigma(H^{\mathbb{D}^{n-1*}}_{\varphi}H^{\mathbb{D}^{n-1}}_{\varphi})$.
\end{proof}

\begin{proof}[Proof of Corollary \ref{CorProdSym}]
Without loss of generality let $\mu\in \sigma(H^{\mathbb{D}^*}_{\chi_1}H^{\mathbb{D}}_{\chi_1})$. 
Then,  by Theorem \ref{ThmProdSym} for $n=2$,  we have  
$\mu |\chi_2(q_2)|^2 \in \sigma(H^{\mathbb{D}^{2*}}_{\chi_1\chi_2}H^{\mathbb{D}^2}_{\chi_1\chi_2})$. 
Hence, inductively, we get  
\[\mu |\chi_2(q_2)\cdots\chi_n(q_n)|^2\in \sigma_e(H^*_{\psi}H_{\psi})\]
completing the proof of the corollary. 
\end{proof}

The proof of Theorem \ref{ThmGenSym} is similar to the proof of Theorem \ref{ThmProdSym}. 
So we will not provide all the details but will highlight the differences and similarities below.

\begin{proof}[Proof of Theorem \ref{ThmGenSym}] 
Let $q\in b\mathbb{D}$ and  
\[\lambda_q
=\left\|H^{\mathbb{D}^{n-1*}}_{\psi_q}H^{\mathbb{D}^{n-1}}_{\psi_q}\right\|_{L^2(\mathbb{D}^{n-1})}
=\left\|H^{\mathbb{D}^{n-1}}_{\psi_q}\right\|^2_{L^2(\mathbb{D}^{n-1})}.\]  
Then 
$\lambda_q\in \sigma(H^{\mathbb{D}^{n-1*}}_{\psi_q}H^{\mathbb{D}^{n-1}}_{\psi_q})$ 
(see, for instance, \cite[Theorem 5.14]{HislopBook}). By i. in Theorem \ref{ThmWeyl} there 
exists $\{g_{j,q}\}\subset A^2(\mathbb{D}^{n-1})$ such that $\|g_{j,q}\|=1$ and 
\begin{align}\label{Eqn6} 
\left\|H^{\mathbb{D}^{n-1*}}_{\psi_q}H^{\mathbb{D}^{n-1}}_{\psi_q} g_{j,q} 
-\lambda_qg_{j,q}\right\|_{L^2(\mathbb{D}^{n-1})}\to 0 \text{ as } j\to\infty.
\end{align} 
 
Let  
\[f_{j,q,p}(z',z_n)=g_{j,q}(z')k^{\mathbb{D}}_p(z_n)\] 
(again here $k^{\mathbb{D}}_p$ is the normalized kernel for $\mathbb{D}$ centered at $p$). 
As in the proof of Theorem \ref{ThmProdSym} we have $f_{j,q,p_j}\to 0$ weakly as $j\to \infty$ 
for any $p_j\to q$. Similar to \eqref{Eqn1}, one can show that  
\[H_{\psi_q}f_{j,q,p}= (H^{\mathbb{D}^{n-1}}_{\psi_q}g_{j,q})k^{\mathbb{D}}_p.\]
Then $\|H_{\psi_q}f_{j,q,p}\| 
=\|H^{\mathbb{D}^{n-1}}_{\psi_q}g_{j,q}\|_{L^2(\mathbb{D}^{n-1})}$ and as in \eqref{Eqn2} we have 
\begin{align*} 
\left\|H^*_{\psi_q}H_{\psi_q}f_{j,q,p}-\lambda_q f_{j,q,p}\right\|
-&\left\|H^*_{\psi}H_{\psi-\psi_q}f_{j,q,p} +H^*_{\psi-\psi_q}H_{\psi_q}f_{j,q,p} \right\| \\
\leq\,& \left\|H^*_{\psi}H_{\psi}f_{j,q,p}-\lambda_qf_{j,q,p}\right\| \\
\leq\, & \left\|H^*_{\psi_q}H_{\psi_q}f_{j,q,p} -\lambda_q f_{j,q,p}\right\| \\ 
&+\left\|H^*_{\psi}H_{\psi-\psi_q}f_{j,q,p} +H^*_{\psi-\psi_q}H_{\psi_q}f_{j,q,p} \right\|. 
\end{align*}
As in the proof of Theorem \ref{ThmProdSym} one can show that $\|(\psi-\psi_q)f_{j,q,p}\|\to 0$
 as $p\to q$. This fact implies that    
\[\left\|H^*_{\psi}H_{\psi-\psi_q}f_{j,q,p}\right\| 
=\left\|H^*_{\psi}(I-P)\Big((\psi-\psi_q)f_{j,q,p}\Big)\right\|\to 0 \text{ as } p\to q.\] 
Furthermore, since $\mathbb{D}^n$ is a product domain and $\psi_q$ is independent of 
$z_n$ using \eqref{Eqn3} with 
\[h_{j,q}=H^{\mathbb{D}^{n-1}}_{\psi_q}g_{j,q}\] 
we have 
\[\left\|H^*_{\psi-\psi_q}H_{\psi_q}f_{j,q,p}\right\| 
=\left\|P\left((H^{\mathbb{D}^{n-1}}_{\psi_q}g_{j,q})(\overline{\psi}-\overline{\psi_q}) 
k^{\mathbb{D}}_{p}\right)\right\| \to 0 \text{ as } p\to q.\] 
Furthermore, by \eqref{Eqn1} we have 
\[H^*_{\psi_q}H_{\psi_q}f_{j,q,p}  
= (H^{\mathbb{D}^{n-1*}}_{\psi_q}H^{\mathbb{D}^{n-1}}_{\psi_q}g_{j,q})k^{\mathbb{D}}_p.\]
Then for any fixed $j$ we have 
\begin{align*} 
\left\|H^*_{\psi}H_{\psi}f_{j,q,p} -\lambda_q f_{j,q,p}\right\| 
\to  \left\|H^{\mathbb{D}^{n-1*}}_{\psi_q}H^{\mathbb{D}^{n-1}}_{\psi_q}g_{j,q} 
-\lambda_q g_{j,q}\right\|_{L^2(\mathbb{D}^{n-1})} \text { as } p\to q.  
\end{align*}
However, by \eqref{Eqn6} we can pass to subsequence of $\{g_{j,q}\}$, if necessary, and get  
\[\left\|H^{\mathbb{D}^{n-1*}}_{\psi_q}H^{\mathbb{D}^{n-1}}_{\psi_q} g_{j,q} 
-\lambda_qg_{j,q}\right\|_{L^2(\mathbb{D}^{n-1})}<\frac{1}{j}\] 
for all $j$. Next we choose  $\{p_j\}\subset \mathbb{D}$ such that $p_j\to q$ 
as $j\to \infty$ and 
\[\left\|H^*_{\psi}H_{\psi}f_{j,q,p_j} -\lambda_q f_{j,q,p_j}\right\|<\frac{2}{j}\] 
for all $j$. Therefore, we have 
\[\left\|H^*_{\psi}H_{\psi}f_{j,q,p_j} -\lambda_q f_{j,q,p_j}\right\| \to 0 \text { as } j\to \infty.\]
Then ii. in Theorem \ref{ThmWeyl} implies that $\lambda_q\in \sigma_e(H^*_{\psi}H_{\psi})$. 

Next we note that $\|H^{\mathbb{D}^{n-1*}}_{\psi_q}H^{\mathbb{D}^{n-1}}_{\psi_q}\|$ depends 
on $q$ continuously because $\psi_q$ depends on $q$ continuously and 
\begin{align*}
\left\|H^{\mathbb{D}^{n-1*}}_{\psi_{q_1}}H^{\mathbb{D}^{n-1}}_{\psi_{q_1}}
-H^{\mathbb{D}^{n-1*}}_{\psi_{q_2}}H^{\mathbb{D}^{n-1}}_{\psi_{q_2}}\right\| 
= & \left\|H^{\mathbb{D}^{n-1*}}_{\psi_{q_1}-\psi_{q_2}}H^{\mathbb{D}^{n-1}}_{\psi_{q_1}} 
+H^{\mathbb{D}^{n-1*}}_{\psi_{q_2}}H^{\mathbb{D}^{n-1}}_{\psi_{q_1}-\psi_{q_2}} \right\| \\
\leq \,& \left\|H^{\mathbb{D}^{n-1}}_{\psi_{q_1}-\psi_{q_2}}\right\|
\left(\left\|H^{\mathbb{D}^{n-1}}_{\psi_{q_1}}\right\| 
+\left\|H^{\mathbb{D}^{n-1}}_{\psi_{q_2}}\right\|\right)\\
\leq \, &\left\| \psi_{q_1}-\psi_{q_2}\right\|_{L^{\infty}} 
\left(\left\|H^{\mathbb{D}^{n-1}}_{\psi_{q_1}}\right\| 
+\left\|H^{\mathbb{D}^{n-1}}_{\psi_{q_2}}\right\|\right). 
\end{align*} 
Then we conclude that $\{\lambda_q:q\in b\mathbb{D}\}$ is connected. Therefore, 
since we assume that the mapping  $q\to \| H^{\mathbb{D}^{n-1}}_{\psi_q}\|$ is non-constant, 
we conclude that $\{\lambda_q:q\in b\mathbb{D}\}$ contains an open interval in $(0, \infty)$. 
That is, $\sigma_e(H^*_{\psi}H_{\psi})$ contains an open interval in $(0, \infty)$.
 \end{proof}

The proof of Theorem \ref{ThmGenSym} above implies the following corollary.

\begin{corollary}\label{Cor1} 
Let $\psi \in C(\overline{\mathbb{D}^2})$ and  
$\psi_{1,\theta}(\xi)=\psi(e^{i\theta},\xi), \psi_{2,\theta}(\xi)=\psi(\xi,e^{i\theta})$ for 
$\xi\in \mathbb{D}$. Then 
\[\left\{\left\| H^{\mathbb{D}}_{\psi_{1,\theta}}\right\|^2_{L^2(\mathbb{D})}:\theta\in [0,2\pi]\right\}
\cup \left\{\left\| H^{\mathbb{D}}_{\psi_{2,\theta}}\right\|^2_{L^2(\mathbb{D})}:\theta\in [0,2\pi]\right\} 
\subset  \sigma_e(H^*_{\psi}H_{\psi}).\] 
\end{corollary}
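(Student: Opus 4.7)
The plan is to leverage the proof of Theorem \ref{ThmGenSym}, which in fact establishes something stronger than its statement reveals, and then to invoke the variable-swapping symmetry of $\mathbb{D}^2$ to cover both boundary directions.

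The key observation is that while Theorem \ref{ThmGenSym} is phrased as ``contains an open interval,'' its proof constructs, for each individual $q\in b\mathbb{D}$, an explicit Weyl sequence $f_{j,q,p_j}(z',z_n)=g_{j,q}(z')k^{\mathbb{D}}_{p_j}(z_n)$ witnessing membership of $\lambda_q=\|H^{\mathbb{D}^{n-1}}_{\psi_q}\|^2$ in $\sigma_e(H^*_\psi H_\psi)$. The non-constancy hypothesis is used only at the end, to upgrade the continuous, hence connected, set $\{\lambda_q:q\in b\mathbb{D}\}$ to an open interval. Specializing to $n=2$ and writing $q=e^{i\theta}$ gives at once $\|H^{\mathbb{D}}_{\psi_{2,\theta}}\|^2\in \sigma_e(H^*_\psi H_\psi)$ for every $\theta\in [0,2\pi]$, which is the second set of the corollary.

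To obtain the first set, I would apply the same argument after interchanging the two variables. Concretely, the swap $\tau(z_1,z_2)=(z_2,z_1)$ induces by composition a unitary operator $U\colon A^2(\mathbb{D}^2)\to A^2(\mathbb{D}^2)$ which commutes with the Bergman projection and conjugates $H^{\mathbb{D}^2}_\psi$ to $H^{\mathbb{D}^2}_{\psi\circ\tau}$. Hence $H^*_\psi H_\psi$ is unitarily equivalent to $H^*_{\psi\circ\tau}H_{\psi\circ\tau}$ and in particular has the same essential spectrum. Applying the previous step to $\psi\circ\tau$ and observing that $(\psi\circ\tau)_{2,\theta}(\xi)=\psi(e^{i\theta},\xi)=\psi_{1,\theta}(\xi)$ yields $\|H^{\mathbb{D}}_{\psi_{1,\theta}}\|^2\in \sigma_e(H^*_\psi H_\psi)$, as required.

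No genuine obstacle arises; the only item that needs a brief verification is the unitary equivalence under the flip, which is routine since $\tau$ is a biholomorphism of $\mathbb{D}^2$ with modulus-one Jacobian. Taking the union of the two inclusions completes the proof.
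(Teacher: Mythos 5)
Your proposal is correct and matches the paper's intent exactly: the paper derives this corollary by noting that the proof of Theorem \ref{ThmGenSym} already places each individual $\lambda_q=\|H^{\mathbb{D}^{n-1}}_{\psi_q}\|^2$ in the essential spectrum before the non-constancy hypothesis is ever used, and the first family follows by the symmetry of $\mathbb{D}^2$ under swapping coordinates. Your explicit verification of the unitary conjugation under the flip is a reasonable (and routine) way to make that symmetry precise.
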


\begin{remark}
We note that Theorem \ref{ThmMonSym} shows that the inclusion in Corollary \ref{Cor1} 
is not an equality in general. Indeed, for $\psi(z_1,z_2)=\zb_1^n\zb_2^m$ the quantities 
$\| H^{\mathbb{D}}_{\psi_{1,\theta}}\|$ and $\| H^{\mathbb{D}}_{\psi_{2,\theta}}\|$ are constant as 
functions of $\theta$. Therefore, the left hand side of the inclusion in Corollary \ref{Cor1} 
contains at most two numbers whereas the right hand side, by Theorem \ref{ThmMonSym}, 
contains infinitely many numbers.
\end{remark}

\begin{proof}[Proof of Corollary \ref{CorProd2Sym}]
If $\varphi$ is not holomorphic then $H^{\mathbb{D}^{n-1}}_{\varphi}$ is non-zero operator 
as $H^{\mathbb{D}^{n-1}}_{\varphi}1\neq 0$. Hence $\|H^{\mathbb{D}^{n-1}}_{\varphi}\|>0$. 
Furthermore,  Since $|\chi|$ is non-constant on the unit circle, the image of the mapping 
\[q\to \left\| H^{\mathbb{D}^{n-1}}_{\psi_q}\right\|_{L^2(\mathbb{D}^{n-1})} 
=|\chi(q)|\left\|H^{\mathbb{D}^{n-1}}_{\varphi}\right\|_{L^2(\mathbb{D}^{n-1})}\] 
contains an open interval. Therefore, by Theorem \ref{ThmGenSym}, the essential spectrum of 
$H^*_{\psi}H_{\psi}$ contains an open interval. 
\end{proof}

\begin{remark}
One can also prove of Corollary \ref{CorProd2Sym} using Theorem \ref{ThmProdSym} as 
follows. Since $H^{\mathbb{D}^{n-1}}_{\varphi}$ is non-zero operator,  there exists a positive 
number $\mu\in \sigma(H^{\mathbb{D}^{n-1*}}_{\varphi}H^{\mathbb{D}^{n-1}}_{\varphi})$. 
Then the image of the mapping $q\to |\chi(q)|^2\mu$ contains an open interval as 
$|\chi|$ is non-constant on the unit circle. Therefore,  by Theorem \ref{ThmProdSym}, 
the essential spectrum of $H^*_{\psi}H_{\psi}$ contains an open interval. 
\end{remark}

\section{Proof of Theorem \ref{ThmMonSym}}
Before we present the proof of Theorem \ref{ThmMonSym} we make some elementary 
computations. Let $\Omega$ be a complete Reinhardt domain in $\mathbb C^n$ and 
again $\mathbb{N}_0=\{0,1,2,3,\ldots\}$. The Bergman kernel function $K$ has the expression
\[K(z,w)=\sum_{\Balpha\in \mathbb{N}_0^n}c_{\Balpha} z^{\Balpha}\wb^{\Balpha}\] 
where $c_{\Balpha}= 0$ when $ \|z^{\Balpha}\|_{L^2(\D)}=\infty$ and  
$c_{\Balpha}= \|z^{\Balpha}\|_{L^2(\D)}^{-2}$ otherwise. 

A function $\psi$ is called quasi-homogeneous if there exists 
$f:[0,\infty)^n\to \C$, $(k_1,\ldots,k_n)\in \mathbb{Z}^n$ such that 
\begin{equation}\label{Eqn4}
\psi(r_1e^{i\theta_1},\ldots,r_ne^{i\theta_n})
=f(r_1, \ldots,r_n)e^{i(k_1\theta_1+\cdots +k_n\theta_n)}.
\end{equation}
For $z=(z_1,\dots,z_n)\in \mathbb C^n$ and $\Balpha=(\alpha_1,\dots,\alpha_n)\in \mathbb Z^n$, 
we will use the notation $|z|$ for $(|z_1|,\dots,|z_n|)$ and write $|z|^{\Balpha}$ for the product 
$\prod_{j=1}^{n}|z_j|^{\alpha_j}$. Then \eqref{Eqn4} can also be expressed as:
\[\psi(z) =f(|z|)e^{i\mathbf{k}\cdot\Btheta},\]
where $\mathbf{k}=(k_1,\dots,k_n)$, $\Btheta=(\theta_1,\dots,\theta_n)$, and $z_j=|z_j|e^{i\theta_j}$.
Now we consider the spectrum of $H^*_{\psi}H_{\psi}$ when $\psi$ is bounded and 
quasi-homogeneous on the complete Reinhardt domain $\Omega$.

By \eqref{EqnHankelProd} we have 
\begin{align*}
H^*_{\psi} H_{\psi} z^{\Balpha}
&=PM_{\overline{\psi}}(I-P)M_\psi(z^{\Balpha})\\
&=PM_{\overline{\psi}}(I-P)f(|z|)|z|^{\Balpha} e^{i(\mathbf{k}+\Balpha)\cdot \Btheta}\\ 
&=PM_{\overline{\psi}}(z^{\Balpha}\psi-P(|z|^{\Balpha} e^{i(\mathbf{k}+\Balpha )\cdot \Btheta}f(|z|))) 
\end{align*}
for $\Balpha\in \mathbb N_0^n$. If the multi-index $\mathbf{k}+\Balpha\notin \mathbb{N}_0^n$, 
then $P(|z|^{\Balpha} e^{i(\mathbf{k}+\Balpha )\cdot \Btheta}f(|z|))=0$ which yields that 
\begin{align}\label{Eqn5} 
H^*_{\psi} H_{\psi} z^{\Balpha}
=P(z^{\Balpha}|\psi|^2)
=\frac{\|z^{\Balpha}\psi\|^2_{L^2}}{\|z^{\Balpha}\|^2_{L^2}}z^{\Balpha}.
\end{align} 
Otherwise, 
\begin{align*}
P(|z|^{\Balpha} e^{i(\mathbf{k}+\Balpha )\cdot \Btheta}f(|z|))
&=\int_{\Omega}K(z,w)|w|^{\Balpha} e^{i(\mathbf{k}+\Balpha )\cdot \Btheta}f(|w|)dV(w)\\
&=\sum_{\Bbeta\in \mathbb{N}_0^n}c_{\Bbeta}z^{\Bbeta} \int_{\Omega} \wb^{\Bbeta}  
	|w|^{\Balpha} e^{i(\mathbf{k}+\Balpha )\cdot \Btheta}f(|w|)dV(w)\\
&=c_{\Balpha+\mathbf{k}} z^{\Balpha+\mathbf{k}}\int_{\Omega}\wb^{\Balpha+\mathbf{k}}
	|w|^{\Balpha} e^{i(\mathbf{k}+\Balpha )\cdot \Btheta}f(|w|)dV(w)\\
&=\frac{\int_{\D}|w|^{2\Balpha+k} f(|w|)dV(w)}{\|z^{\Balpha+\mathbf{k}}\|^2_{L^2}}
z^{\mathbf{k}+\Balpha}, 
\end{align*}
which implies that
\begin{align}\label{Eqn8}
\nonumber H^*_{\psi} H_{\psi} z^{\Balpha} 
=\,&PM_{\overline{\psi}}(z^{\Balpha}\psi-P(|z|^{\Balpha} e^{i(\mathbf{k}+\Balpha )\cdot \Btheta}f(|z|)))\\
=\,&PM_{\overline{\psi}}\left(z^{\Balpha}\psi-\frac{\int_{\D}|w|^{2\Balpha+k} f(|w|)dV(w)}{\|z^{\Balpha 
+\mathbf{k}}\|^2_{L^2}}z^{\mathbf{k}+\Balpha}\right)\nonumber\\
=\,&P\left(z^{\Balpha} |\psi|^2-\frac{\int_{\D}|w|^{2\Balpha+k} f(|w|)dV(w)}{\|z^{\Balpha 
+\mathbf{k}}\|^2_{L^2}}|z|^{\mathbf{k}}z^{\Balpha}\overline{f(|z|)}\right)\nonumber\\
=\,&z^{\Balpha}\left(\frac{\|z^{\Balpha} \psi\|^2_{L^2}}{\|z^{\Balpha}\|^2_{L^2}} 
	-\frac{\left|\int_{\D}|w|^{2\Balpha+k}  f(|w|)dV(w)\right|^2}{\|z^{\Balpha}\|^2_{L^2}\| 
z^{\Balpha+\mathbf{k}}\|^2_{L^2}}\right).
\end{align}
Hence the spectrum $\sigma(H^*_{\psi}H_{\psi})$ contains the eigenvalues 
\[\frac{\|z^{\Balpha} \psi\|^2_{L^2}}{\|z^{\Balpha}\|^2_{L^2}} 
	-\frac{\left|\int_{\D}|w|^{2\Balpha+k} f(|w|)dV(w)\right|^2}{\|z^{\Balpha}\|^2_{L^2}\| 
z^{\Balpha+\mathbf{k}}\|^2_{L^2}}\]
corresponding to the eigenfunction $z^{\Balpha}$ for $\mathbf{k}+\Balpha\in \mathbb{N}_0^n$ and 
\[\frac{\|z^{\Balpha} \psi\|^2_{L^2}}{\|z^{\Balpha}\|^2_{L^2}}\]
for $\mathbf{k}+\Balpha\not\in \mathbb{N}_0^n$. 
Furthermore, since the eigenfunctions form an orthogonal basis for $A^2(\D)$, the closure of 
this set is the whole spectrum $\sigma(H^*_{\psi}H_{\psi})$ (see by Lemma \ref{LemClosure} below). 

\begin{example}
When $f(|z|)=|z|^{\mathbf{k}}$ for $\mathbf{k}\in \mathbb{N}_0^n$, equation \eqref{Eqn8} becomes
\[H^*_{\psi} H_{\psi} z^{\Balpha} 
=z^{\Balpha}\left(\frac{\|z^{\Balpha+\mathbf{k}} \|^2_{L^2}}{\|z^{\Balpha}\|^2_{L^2}}-
\frac{\|z^{2\Balpha+2\mathbf{k}}\|^2_{L^1}}{\|z^{\Balpha}\|^2_{L^2} 
 \|z^{\Balpha+\mathbf{k}}\|^2_{L^2}}\right)=0.\]
This is not surprising since in this case the symbol $\psi(z)=z^{\mathbf{k}}$ is holomorphic. 
\end{example}

\begin{example}
When $\psi(z)=e^{i\mathbf{k}\cdot \Btheta}$ for $\mathbf{k}\in \mathbb{N}_0^n$, equation 
\eqref{Eqn8} becomes
\[H^*_{\psi} H_{\psi} z^{\Balpha} 
=z^{\Balpha}\left(1 -\frac{\|z^{2\Balpha+\mathbf{k}}\|^2_{L^1}}{\|z^{\Balpha}\|^2_{L^2} 
 \|z^{\Balpha+\mathbf{k}}\|^2_{L^2}}\right).\]
\end{example}

\begin{example}
When $\mathbf{k}=\mathbf{0}$, $\psi(z)$ is radial and \eqref{Eqn8} becomes
\[H^*_{\psi} H_{\psi} z^{\Balpha} 
=z^{\Balpha} \left(\frac{\|z^{\Balpha}\psi \|^2_{L^2}}{\|z^{\Balpha}\|^2_{L^2}} 
-\frac{\|z^{2\Balpha}\psi\|^2_{L^1}}{\|z^{\Balpha}\|^4_{L^2}}\right).\]
\end{example}

\begin{lemma}\label{LemClosure} 
Let $T:H\to H$ be a bounded linear map on a separable Hilbert space $H$. Assume 
that $\lambda_j$ is an eigenvalue with a corresponding eigenvector $u_j$ for  
$j\in \mathbb{N}$ and $\{u_j:j\in \mathbb{N}\}$ forms an  orthogonal basis 
for $H$. Then $\sigma(T)=\overline{\{\lambda_j:j\in \mathbb{N}\}}$. 
Furthermore, $\lambda\in \sigma_e(T)$ if and only if it is the limit 
of a subsequence of $\{\lambda_j\}$.
\end{lemma}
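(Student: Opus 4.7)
My plan is to prove the two parts separately, both leveraging the diagonal representation of $T$ in the orthonormal basis $e_j:=u_j/\|u_j\|$, which gives $Tx=\sum_j \lambda_j\langle x,e_j\rangle e_j$ for every $x\in H$ and $\sup_j|\lambda_j|\leq \|T\|<\infty$ (the bound follows from $|\lambda_j|=\|Te_j\|$).

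For the first equality $\sigma(T)=\overline{\{\lambda_j:j\in\mathbb{N}\}}$, one inclusion is immediate: each $\lambda_j$ is an eigenvalue, hence lies in the closed set $\sigma(T)$. For the reverse, I would fix $\lambda\notin \overline{\{\lambda_j:j\in \mathbb{N}\}}$, set $\delta=\inf_j|\lambda-\lambda_j|>0$, and define $Sx:=\sum_j(\lambda_j-\lambda)^{-1}\langle x,e_j\rangle e_j$. By Parseval $S$ is bounded with $\|S\|\leq 1/\delta$, and a term-by-term computation on the basis shows $(T-\lambda I)S=S(T-\lambda I)=I$, so $\lambda\in \rho(T)$.

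For the characterization of $\sigma_e(T)$, recall that $\sigma_d(T)$ consists of the isolated points of $\sigma(T)$ of finite algebraic multiplicity. If $\lambda_{j_k}\to\lambda$ along some strictly increasing sequence $\{j_k\}$, then either infinitely many $\lambda_{j_k}$ equal $\lambda$ (so the eigenspace of $\lambda$ contains the infinite orthonormal set $\{e_{j_k}\}$, giving infinite geometric multiplicity), or infinitely many $\lambda_{j_k}$ are distinct from $\lambda$ (making $\lambda$ an accumulation point of $\sigma(T)$). In either case $\lambda\notin \sigma_d(T)$, so $\lambda\in \sigma_e(T)$. Conversely, if $\lambda\in \sigma(T)=\overline{\{\lambda_j:j\in\mathbb{N}\}}$ is not the limit of any strictly increasing subsequence of $\{\lambda_j\}$, then only finitely many $j$ can satisfy $\lambda_j=\lambda$ and, for some $\varepsilon>0$, only finitely many $j$ can satisfy $0<|\lambda_j-\lambda|<\varepsilon$; hence $\lambda$ is isolated in $\sigma(T)$ with finite-dimensional eigenspace.

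The main subtlety, and what I expect to be the principal obstacle, is matching the \emph{algebraic} multiplicity demanded by the definition of $\sigma_d$ with the geometric multiplicity produced by the argument above. Because $\{e_j\}$ is an orthonormal basis of eigenvectors, $T$ is normal; consequently its generalized eigenspaces coincide with its eigenspaces, or equivalently the Riesz projection at an isolated point of $\sigma(T)$ is the orthogonal projection onto $\ker(T-\lambda I)$. This identifies the two multiplicities and places an isolated $\lambda$ of finite geometric multiplicity into $\sigma_d(T)$, completing the argument.
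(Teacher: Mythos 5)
Your proof is correct and follows essentially the same route as the paper: the resolvent at $\lambda\notin\overline{\{\lambda_j:j\in\mathbb{N}\}}$ is constructed as the diagonal operator with entries $(\lambda_j-\lambda)^{-1}$, and the essential-spectrum characterization comes from the same dichotomy (infinite multiplicity versus accumulation point of the spectrum). Your added remark that normality of $T$ identifies algebraic with geometric multiplicity makes explicit a point the paper leaves implicit, but it does not change the argument.
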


\begin{proof} 
Since $\sigma(T)$ is a compact set containing $\{\lambda_j:j\in \mathbb{N}\}$ we 
have $\overline{\{\lambda_j:j\in \mathbb{N}\}}\subset \sigma(T)$. To prove the 
converse, assume that $\lambda\not\in \overline{\{\lambda_j:j\in \mathbb{N}\}}$. 
Then we define $Su_j=\frac{1}{\lambda_j-\lambda}u_j$ for all $j$. Then $S$ is a 
bounded operator, as $\{(\lambda_j-\lambda)^{-1}\}$ is a bounded sequence, and 
it is the inverse of $T-\lambda I$, as \[(T-\lambda I)Su_j=S(T-\lambda I)u_j=u_j\]
for all $j$. Then, $\lambda \not\in \sigma(T)$. That is, 
$\sigma(T)\subset  \overline{\{\lambda_j:j\in \mathbb{N}\}}$. 

We recall that $\sigma_e(T)=\sigma(T)\setminus \sigma_d(T)$ where 
 $\sigma_d(T)$ is composed of isolated eigenvalues with finite multiplicity. 
To prove the last statement, let $\lambda\in \sigma_e(T)$. Then $\lambda\in \sigma(T)$ 
is not an isolated eigenvalue with finite multiplicity. That is, either $\lambda$ 
is not isolated in $\sigma(T)$ or it is an eigenvalue with infinite multiplicity. 
Either way,  there exists a subsequence of $\{\lambda_j\}$ converging to $\lambda$. 
Conversely, if $\lambda\in \sigma(T)$ is the limit of a subsequence $\{\lambda_{j_k}\}$ of 
$\{\lambda_j\}$, then either $\lambda=\lambda_{j_k}$ for infinitely many $k$s 
(hence, it is an eigenvalue with infinite multiplicity) or $\lambda$ 
is not isolated in $\sigma(T)$. Again, either way $\lambda \in \sigma_e(T)$. 
\end{proof}

Finally, we present the proof of Theorem \ref{ThmMonSym}.   

\begin{proof}[Proof of Theorem \ref{ThmMonSym}]
Note that 
$\psi(z) =z^{\mathbf{n}}\zb^{\mathbf{m}} 
=|z|^{\mathbf{n}+\mathbf{m}}e^{i(\mathbf{n}-\mathbf{m})\cdot \Btheta}$. 
By \eqref{Eqn5} and \eqref{Eqn8}, it follows that 
\begin{align}\label{Eqn9}
H^*_{\psi} H_{\psi} z^{\Balpha}
=\begin{cases}
z^{\Balpha} \frac{\|z^{\Balpha+\mathbf{n}}\zb^{\mathbf{m}}\|^2_{L^2}}{\|z^{\Balpha}\|^2_{L^2}}
	& \Balpha+\mathbf{n}-\mathbf{m}\notin \mathbb{N}_0^n\\
z^{\Balpha}\left(\frac{\|z^{\Balpha+\mathbf{n}}\zb^{\mathbf{m}}\|^2_{L^2}}{\|z^{\Balpha}\|^2_{L^2}} 
-\frac{\|z^{2\Balpha+2\mathbf{n}-\mathbf{m}}\zb^{\mathbf{m}}\|^2_{L^1}}{\|z^{\Balpha}\|^2_{L^2} 
 \|z^{\Balpha+\mathbf{n}-\mathbf{m}}\|^2_{L^2}}\right)
	& \Balpha+\mathbf{n}-\mathbf{m}\in \mathbb N_0^n.
\end{cases}
\end{align}
We note that for multi-index $\Bbeta\in \mathbb N_0^n$ we have, 
\[\|z^{\Bbeta}\|^2_{L^2}=\|z^{2\Bbeta}\|_{L^1}=\frac{\pi^n}{\prod_{k=1}^{n}(\beta_k+1)}.\]
Substituting this into \eqref{Eqn9} yields that
\begin{align*}
H^*_{\psi} H_{\psi} z^{\Balpha} 
=\begin{cases}
z^{\Balpha}\prod_{k=1}^{n}\frac{\alpha_k+1}{\alpha_k+n_k+m_k+1}
& \Balpha+\mathbf{n}-\mathbf{m}\notin \mathbb{N}_0^n\\
z^{\Balpha}\left(\prod_{k=1}^{n}\frac{\alpha_k+1}{\alpha_k+n_k+m_k+1}
- \prod_{k=1}^{n}\frac{(\alpha_k+1)(\alpha_k+n_k-m_k+1)}{(\alpha_k+n_k+1)^2}\right) 
& \Balpha+\mathbf{n}-\mathbf{m}\in \mathbb{N}_0^n. 
\end{cases}
\end{align*}
Hence 
\[H^*_{\psi} H_{\psi} z^{\Balpha}=\lambda_{\mathbf{n},\mathbf{m},\Balpha,B_n}z^{\Balpha}\]  
for $\Balpha\in \mathbb{N}_0^n$ where  $\lambda_{\mathbf{n},\mathbf{m},\Balpha,B_n}$ 
is defined before Theorem \ref{ThmMonSym} and  $B_n=\{1,2,3,\ldots, n\}.$ 
That is, $z^{\Balpha}$ is an eigenvector corresponding to the eigenvalue  
$\lambda_{\mathbf{n},\mathbf{m},\Balpha,B_n}$. 
Furthermore, since $\{z^{\Balpha}:\Balpha\in \mathbb{N}_0^n\}$ forms an orthogonal 
basis for $A^2(\mathbb{D}^n)$, Lemma \ref{LemClosure} implies that 
\[\sigma(H^*_{\psi} H_{\psi}) =\overline{\left\{\lambda_{\mathbf{n},\mathbf{m},\Balpha,B_n}: 
\Balpha\in \mathbb{N}_0^n\right\}}.\]
We note that in case $\mathbf{m}=0$ the operator $H^*_{\psi} H_{\psi} z^{\Balpha}$ 
is the zero operator and hence 0 is the only eigenvalue with infinite multiplicity. 
For the rest of the proof we will  assume that $\mathbf{m}\neq 0$. 

Lemma \ref{LemClosure} implies that each element in the spectrum is either 
an eigenvalue or a limit of a sequence of eigenvalues. To describe the spectrum 
outside of the point spectrum  we assume that 
$\lambda \in \sigma(H^*_{\psi} H_{\psi})$ such that 
\[\lambda=\lim_{j\to\infty}\lambda_{\mathbf{n},\mathbf{m},\Balpha(j),B_n} \] 
for a sequence $\{\Balpha(j)\}\subset \mathbb{N}_0^n$. Next we will show that 
either $\lambda=0$ or $\lambda=\lambda_{\mathbf{n},\mathbf{m},\Balpha,B}$ 
for some  $\emptyset\neq B\subset B_n$ and $\Balpha\in \mathbb{N}_0^n$. 
We assume that $\lambda\neq 0$.

We note that $z^{\Balpha(j)}$ is an  eigenvector corresponding to the 
eigenvalue  $\lambda_{\mathbf{n},\mathbf{m},\Balpha(j),B_n}$. Let us assume that 
\[\lambda_{\mathbf{n},\mathbf{m},\Balpha(j),B_n} 
=\prod_{k=1}^{n} \frac{\alpha_k(j)+1}{\alpha_k(j)+n_k+m_k+1}.\]
For each $j$ there exists $k\in B_n$ such that $\alpha_k(j)<m_k-n_k$. Then there exists $k$ 
such that  $\alpha_k(j)<m_k-n_k$ for infinitely many $j$s. That is,  the $k$th sequence 
$\{\alpha_k(j)\}_{j=1}^{\infty}$ has a subsequence bounded by $m_k-n_k$. 
We pass to that subsequence  of $\{\Balpha(j)\}$ and still call it $\{\Balpha(j)\}$.  

Next we construct $B$ as follows. Let $k_1\in B_n$ be the smallest integer so that 
$\{\alpha_{k_1}(j)\}_{j=1}^{\infty}$ has a bounded subsequence. Then we pass onto a 
subsequence, still calling it $\{\Balpha(j)\}$, so that $\{\alpha_{k_1}(j)\}_{j=1}^{\infty}$ 
is a constant sequence. If $\alpha_{k}(j)\to \infty$ as $j\to\infty$ for all $k\geq k_1+1$ 
then we stop here and $B=\{k_1\}$. Otherwise, we choose $k_2\geq k_1+1$ to be the 
smallest integer so that  $\{\alpha_{k_2}(j)\}_{j=1}^{\infty}$ has a bounded subsequence. 
Now we pass onto a subsequence, again calling it $\{\Balpha(j)\}$, so that 
$\{\alpha_{k_2}(j)\}_{j=1}^{\infty}$ is a constant sequence. After finitely many steps we 
obtain $B=\{k_1,\ldots,k_p\}$ and a subsequence $\{\Balpha(j)\}$ so that   $\{\alpha_{k}(j)\}$ 
is a constant sequence for every $k\in B$ and $\alpha_{k}(j)\to \infty$ as $j\to\infty$ for 
$k\not\in B$.

Hence for this subsequence of $\{\Balpha(j)\}$, taking limit as $j\to \infty$, we have 
\[\lambda=\lim_{j\to\infty}\lambda_{\mathbf{n},\mathbf{m},\Balpha(j),B_n}
=\lim_{j\to\infty}\prod_{k=1}^{n} \frac{\alpha_k(j)+1}{\alpha_k(j)+n_k+m_k+1}
=\prod_{k\in B}\frac{\alpha_k+1}{\alpha_k+n_k+m_k+1}
=\lambda_{\mathbf{n},\mathbf{m},\Balpha,B}\]
where $\Balpha\in \mathbb{N}_0^n$ such that  $\alpha_k$ is arbitrary for $k\not\in B$ and 
$\alpha_k=\alpha_k(j)$ for $k\in B$. We note that $\alpha_k<m_k-n_k$ for some $k\in B$.

Similarly, if $\lim_{j\to\infty}\lambda_{\mathbf{n},\mathbf{m},\Balpha(j),B_n} =\lambda$ where 

\[\lambda_{\mathbf{n},\mathbf{m},\Balpha(j),B_n}
=\left(\prod_{k=1}^{n}\frac{\alpha_k(j)+1}{\alpha_k(j)+n_k+m_k+1}
- \prod_{k=1}^{n}\frac{(\alpha_k(j)+1)(\alpha_k(j)+n_k-m_k+1)}{(\alpha_k(j)+n_k+1)^2}\right)\]
then 
\[\lambda=\lim_{j\to\infty}\lambda_{\mathbf{n},\mathbf{m},\Balpha(j),B_n} 
=\lambda_{\mathbf{n},\mathbf{m},\Balpha,B},\]
and $\alpha_k\geq  m_k-n_k$ for all $k\in B$. Hence we have shown that 
\begin{align*} 
\sigma(H^*_{\psi}H_{\psi})\subset \{0\}\cup \{\lambda_{\mathbf{n},\mathbf{m},\Balpha,B}: 
\Balpha\in \mathbb{N}_0^n, \emptyset\neq B\subset B_n\}.
\end{align*}

Next we prove the converse of the inclusion above. First, we choose 
$\alpha_k(j)=j$ for all $k$. Then it is easy to see that 
$\lambda_{\mathbf{n},\mathbf{m},\Balpha(j),B_n}\to 0$ as $j\to \infty$. 
Hence $0\in \sigma(H^*_{\psi}H_{\psi})$. 
Secondly, let us assume that  $\emptyset \neq B \subset B_n$ 
and $\alpha_k<m_k-n_k$ for some $k\in B$. We choose $\alpha_k(j)=j$ for $k\not\in B$ 
and $\alpha_k(j)=\alpha_k$ for $k\in B$. Then  
\[\lambda_{\mathbf{n},\mathbf{m},\Balpha(j),B_n} 
\to \lambda_{\mathbf{n},\mathbf{m},\Balpha,B}\in \sigma(H^*_{\psi}H_{\psi}) \text{ as } j\to \infty.\] 
Similarly,  if $\alpha_k\geq m_k-n_k$ for all $k\in B$ we choose $\{\Balpha(j)\}$ as above again  
and conclude that $\lambda_{\mathbf{n},\mathbf{m},\Balpha,B}\in \sigma(H^*_{\psi}H_{\psi})$. 
Therefore, 
\begin{align*} 
\sigma(H^*_{\psi}H_{\psi})=\{0\}\cup \{\lambda_{\mathbf{n},\mathbf{m},\Balpha,B}: 
\Balpha\in \mathbb{N}_0^n, \emptyset\neq B\subset B_n\}.
\end{align*}

We finish the proof by proving the claims about multiplicities. Let us assume that $n_k+m_k\geq 1$ 
for all $k\in B_n$ and $\lambda$ is a non-zero eigenvalue of infinite multiplicity. Then there 
exists a sequence of multi-indices $\{\Balpha(j)\}$ such that  
$\lambda_{\mathbf{n},\mathbf{m},\Balpha(j),B_n}=\lambda$ for all $j$. 
After passing to a subsequence, we may assume that $\lim_{j\to\infty} \alpha_k(j)$ 
exists allowing infinity as the limit for all $k$. 

If $B=\emptyset$ then $\lambda=0$. For the rest of the proof, we assume that $B\neq\emptyset$ 
and, by passing to a subsequence if necessary, $\alpha_k(j)=\alpha_k$ for $k\in B$ and 
$\alpha_k(j)\to\infty$ for $k\not\in B$. In case $\alpha_k(j)< m_k-n_k$ for some $k\in B$, 
it follows that 
\[\lambda_{\mathbf{n},\mathbf{m},\Balpha(j),B_n}
<\prod_{k\in B}\frac{\alpha_k(j)+1}{\alpha_k(j)+n_k+m_k+1}
=\prod_{k\in B}\frac{\alpha_k+1}{\alpha_k+n_k+m_k+1}=\lambda\] 
which is a contradiction with $\lambda_{\mathbf{n},\mathbf{m},\Balpha(j),B_n} =\lambda$ for all $j$. 

On the other hand, if $\alpha_k(j)\geq m_k-n_k$ for all $k=1,2,\dots,n$, the equality 
$\lambda_{\mathbf{n},\mathbf{m},\Balpha(j),B_n}=\lambda$ for all $j$ implies that
\begin{align}\nonumber 
\lambda=\prod_{k\in B}&\frac{\alpha_k(j)+1}{\alpha_k(j)+n_k+m_k+1}
-\prod_{k\in B}\frac{(\alpha_k(j)+1)(\alpha_k(j)+n_k-m_k+1)}{(\alpha_k(j)+n_k+1)^2}\\
\label{Eqn10} &= \prod_{k=1}^n\frac{\alpha_k(j)+1}{\alpha_k(j)+n_k+m_k+1}
-\prod_{k= 1}^n\frac{(\alpha_k(j)+1)(\alpha_k(j)+n_k-m_k+1)}{(\alpha_k(j)+n_k+1)^2}.
\end{align}

Next we will prove that this is impossible for all $j$. Let us set 
\begin{align*}
a&=\prod_{k\in B}\frac{\alpha_k(j)+1}{\alpha_k(j)+n_k+m_k+1},
\\b&=\prod_{k\in B}\frac{(\alpha_k(j)+1)(\alpha_k(j)+n_k-m_k+1)}{(\alpha_k(j)+n_k+1)^2}.
\end{align*} 
Then \eqref{Eqn10} becomes
\begin{align}\label{Eqn12} 
a-b-a\prod_{k\not\in B}\frac{\alpha_k(j)+1}{\alpha_k(j)+n_k+m_k+1}
+b\prod_{k\not\in B}\frac{(\alpha_k(j)+1)(\alpha_k(j)+n_k-m_k+1)}{(\alpha_k(j)+n_k+1)^2}
=0. 
\end{align} 
Rearranging this equation, we obtain a polynomial equation
\begin{align*}
(a-b)&\prod_{k\not\in B}(\alpha_k(j)+n_k+1)^2(\alpha_k(j)+n_k+m_k+1)-a
\prod_{k\not\in B}(\alpha_k(j)+n_k+1)^2(\alpha_k(j)+1)\\
\nonumber &+b\prod_{k\not\in B}(\alpha_k(j)+1)(\alpha_k(j)+n_k-m_k+1)(\alpha_k(j)+n_k+m_k+1)
=0
\end{align*}
Rewriting the equality above, we get 
\begin{align}
\nonumber (a-b)&\prod_{k\not\in B}(\alpha_k(j)+n_k+1)^2 
\left(\prod_{k\not\in B}(\alpha_k(j)+n_k+m_k+1)-\prod_{k\not\in B}(\alpha_k(j)+1)\right)\\
\label{Eqn11}&-b\prod_{k\not\in B}(\alpha_k(j)+1)m^2_k
=0.
\end{align} 
If $m_k\geq 1$ for some $k\in B$ then $a-b>0$ and   
\[\prod_{k\not\in B}(\alpha_k(j)+n_k+m_k+1)-\prod_{k\not\in B}(\alpha_k(j)+1)\geq 1.\] 
Furthermore, $\prod_{k\not\in B}(\alpha_k(j)+n_k+1)^2$ dominates 
$\prod_{k\not\in B}(\alpha_k(j)+1)m^2_k$ as $j\to \infty$. Hence, the left hand side 
of \eqref{Eqn11} converges to $\infty$ as $j\to \infty$, reaching a contradiction. 

On the other hand, if $m_k=0$ for all $k\in B$ and since $\mathbf{m}\neq 0$, we have $m_k\geq 1$ 
for some $k\not\in B$. Then $a-b=0$ and \eqref{Eqn12} implies 
\[-\prod_{k\not\in B}\frac{\alpha_k(j)+1}{\alpha_k(j)+n_k+m_k+1}
+\prod_{k\not\in B}\frac{(\alpha_k(j)+1)(\alpha_k(j)+n_k-m_k+1)}{(\alpha_k(j)+n_k+1)^2}=0.\]
However, the left hand side of the equation above is equal to a positive multiple of the 
following expression
\[-\prod_{k\not\in B} (\alpha_k(j)+n_k+1)^2 
+\prod_{k\not\in B} \left((\alpha_k(j)+n_k+1)^2 -m_k^2\right) \]
which is negative,  reaching a contradiction again. Therefore, we showed that, if $m_k+n_k\geq 1$ 
for all $k\in B_n$ then each eigenvalue is of finite multiplicity. 

Finally, let us define $B_0=\{k\in B_n:m_k=n_k=0\}$. Now we will show that 
all of the eigenvalues have infinite multiplicities when $B_0\neq \emptyset$. We note that since 
$\mathbf{m}\neq 0$ we know that $B_0\subsetneqq  B_n$.

Assume that $\lambda_{\mathbf{n},\mathbf{m},\Balpha,B_n}$ is an eigenvalue. 
Then either $\alpha_k<m_k-n_k$ for some $k$ or $\alpha_k\geq m_k-n_k$ for all $k$. 
In the first case, we have $\alpha_k<m_k-n_k$ for some $k\not\in B_0$. Then 
\[\lambda_{\mathbf{n},\mathbf{m},\Balpha(j),B_n}= 
\prod_{k=1 }^{n}\frac{\alpha_k(j)+1}{\alpha_k(j)+n_k+m_k+1} =
\prod_{k\not\in B_0}\frac{\alpha_k+1}{\alpha_k+n_k+m_k+1}
=\lambda_{\mathbf{n},\mathbf{m},\Balpha,B_n}\] 
for $\alpha_k(j)=j$ for $k\in B_0$ and $\alpha_k(j)=\alpha_k$ for $k\not\in B_0$. 
Similarly, if $\alpha_k\geq m_k-n_k$ for all $k$ we make the same choice 
$\alpha_k(j)=j$ for $k\in B_0$ and $\alpha_k(j)=\alpha_k$ for $k\not\in B_0$ and 
one can see that 
\begin{align*}
\lambda_{\mathbf{n},\mathbf{m},\Balpha(j),B_n}=& \, 
\prod_{k=1}^n\frac{\alpha_k(j)+1}{\alpha_k(j)+n_k+m_k+1}
-\prod_{k= 1}^n\frac{(\alpha_k(j)+1)(\alpha_k(j)+n_k-m_k+1)}{(\alpha_k(j)+n_k+1)^2}\\
=\,&\prod_{k\not\in B_0}\frac{\alpha_k+1}{\alpha_k+n_k+m_k+1}
-\prod_{k\not\in B_0}\frac{(\alpha_k+1)(\alpha_k+n_k-m_k+1)}{(\alpha_k+n_k+1)^2}\\
=\,&\lambda_{\mathbf{n},\mathbf{m},\Balpha,B_n}.
\end{align*} 
Therefore, in either case the eigenvalues have infinite multiplicities.
\end{proof}  

By the proof of Theorem \ref{ThmMonSym}, we can also characterize 
the essential spectrum of Hankel operators with monomial symbols. 
\begin{corollary}
Let $\psi(z)=z^{\mathbf{n}}\zb^{\mathbf{m}}$ for some $\mathbf{m},\mathbf{n}\in \mathbb{N}^n_0$. 
If $m_k+n_k=0$ for some $k\in B_n$, then the essential spectrum of 
$H^*_{\psi}H_{\psi}$ on $A^2(\mathbb{D}^n)$ 
\[\sigma_e(H^*_{\psi}H_{\psi})=\sigma(H^*_{\psi}H_{\psi}) 
=\{0\}\cup \left\{\lambda_{\mathbf{n},\mathbf{m},\Balpha,B}:\Balpha\in \mathbb{N}_0^n, 
\emptyset\neq B\subset B_n\right\}.\]
On the other hand,  if $n_k+m_k\geq 1$ for all $k\in B_n$ and $m_k\geq 1$ for some $k\in B_n$, 
then the essential spectrum of $H^*_{\psi}H_{\psi}$ on $A^2(\mathbb{D}^n)$ 
\[\sigma_e(H^*_{\psi}H_{\psi})=\{0\}\cup \left\{\lambda_{\mathbf{n},\mathbf{m},\Balpha,B}:
\Balpha\in \mathbb{N}_0^n, \emptyset\neq B\subset B_n \text{ and } B\neq  B_n\right\}.\]
\end{corollary}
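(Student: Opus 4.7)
The plan is to derive both claims directly from Theorem \ref{ThmMonSym} together with Lemma \ref{LemClosure}. Recall that Theorem \ref{ThmMonSym} exhibits the monomials $\{z^{\Balpha}:\Balpha\in\mathbb{N}_0^n\}$ as a complete orthogonal set of eigenvectors of $H^*_\psi H_\psi$ with corresponding eigenvalues $\lambda_{\mathbf{n},\mathbf{m},\Balpha,B_n}$, and that Lemma \ref{LemClosure} then identifies $\sigma_e(H^*_\psi H_\psi)$ with the set of $\lambda$ that are either limits of subsequences (with infinitely many distinct terms) of this eigenvalue sequence, or else eigenvalues of infinite multiplicity. So the corollary is essentially a bookkeeping exercise built on the two multiplicity statements already obtained in Theorem \ref{ThmMonSym}.

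For the first case, when $m_k+n_k=0$ for some $k\in B_n$, Theorem \ref{ThmMonSym} asserts that every eigenvalue of $H^*_\psi H_\psi$ has infinite multiplicity, so every eigenvalue lies in $\sigma_e$. I would then use the fact that $\sigma_e$ is a closed subset of $\sigma$, combined with the equality $\sigma=\overline{\{\lambda_{\mathbf{n},\mathbf{m},\Balpha,B_n}:\Balpha\in\mathbb{N}_0^n\}}$ furnished by Lemma \ref{LemClosure}, to conclude $\sigma_e\supseteq\sigma$; the reverse inclusion is automatic.

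For the second case, where $n_k+m_k\geq 1$ for all $k$ and $m_k\geq 1$ for some $k$, Theorem \ref{ThmMonSym} guarantees that every eigenvalue of $H^*_\psi H_\psi$ has finite multiplicity, so Lemma \ref{LemClosure} reduces $\sigma_e$ to the set of limit points of the eigenvalue sequence $\{\lambda_{\mathbf{n},\mathbf{m},\Balpha,B_n}\}_{\Balpha\in\mathbb{N}_0^n}$. The characterization of such limit points is already contained inside the proof of Theorem \ref{ThmMonSym}: passing to a diagonal subsequence along which each coordinate $\alpha_k(j)$ either stays constant (the indices $k$ for which this happens form a nonempty set $B$) or tends to infinity, one sees that the limit is either $0$ (if no coordinate stays bounded) or $\lambda_{\mathbf{n},\mathbf{m},\Balpha,B}$ for some nonempty $B\subsetneq B_n$; conversely each such value arises as the limit given by $\alpha_k(j)=\alpha_k$ for $k\in B$ and $\alpha_k(j)=j$ for $k\notin B$.

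The one subtlety I anticipate, and that I would take care to state explicitly, is that an eigenvalue $\lambda_{\mathbf{n},\mathbf{m},\Balpha,B_n}$ in the second case may happen to coincide numerically with some $\lambda_{\mathbf{n},\mathbf{m},\Balpha',B}$ for $B\subsetneq B_n$; when this occurs the finite-multiplicity eigenvalue is in fact a limit point of the eigenvalue sequence and hence lies in $\sigma_e$, but it is already included in the set on the right-hand side of the claimed equality. An eigenvalue which is \emph{not} produced this way is isolated with finite multiplicity, hence belongs to $\sigma_d$ rather than $\sigma_e$; this is exactly what Lemma \ref{LemClosure} records, and it is the reason why the second case excludes the index choice $B=B_n$ that the first case retains.
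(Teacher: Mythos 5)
Your proposal is correct and follows essentially the same route as the paper: both cases are read off from the multiplicity dichotomy in Theorem \ref{ThmMonSym} combined with Lemma \ref{LemClosure}, and the second case uses the same subsequence analysis (coordinates of $\Balpha(j)$ either frozen on a set $B\subsetneq B_n$ or tending to infinity) to identify the limit points. Your explicit remark about possible numerical coincidences $\lambda_{\mathbf{n},\mathbf{m},\Balpha,B_n}=\lambda_{\mathbf{n},\mathbf{m},\Balpha',B}$ is a small clarification the paper leaves implicit, but it does not change the argument.
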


\begin{proof}
First let us assume that $B_0=\{k\in B_n:m_k+n_k=0\}\neq\emptyset$. Then all of the 
eigenvalues have infinite multiplicity. So the discrete spectrum is empty and the 
essential spectrum is identical to the spectrum. 

Next we assume that $B_0=\emptyset$ and  $m_k\geq 1$ for some $k\in B_n$.  
Then 0 is in the essential spectrum because  
$0=\lim_{j\to\infty}\lambda_{\mathbf{n},\mathbf{m},\Balpha(j),B_n}$ for $\alpha_k(j)=j$ 
for all $k\in B_n$. 
Next  one can see that for any $\Balpha\in \mathbb N^n_0$, the value 
$\lambda_{\mathbf{n},\mathbf{m},\Balpha,B}$ is in the essential spectrum for 
$\emptyset\neq B\subset B_n$ and $B\neq B_n$ as follows. Let $\{\Balpha(j)\}$ be a sequence  
in $\mathbb N^n_0$ with $\alpha_k(j)=\alpha_k$ for $k\in B$ and $\alpha_k(j)=j$ for $k\notin B$. 
Then as shown in the proof of Theorem \ref{ThmMonSym}, the constant 
$\lambda_{\mathbf{n},\mathbf{m},\Balpha(j),B_n}$ is an eigenvalue 
and $\lambda_{\mathbf{n},\mathbf{m},\Balpha,B}
=\lim_{j\to\infty}\lambda_{\mathbf{n},\mathbf{m},\Balpha(j),B_n}$. 

On the other hand, Lemma \ref{LemClosure} implies that every 
$\lambda$ in the essential spectrum is the limit of a sequence of eigenvalues. 
By passing to subsequences argument as in Theorem \ref{ThmMonSym} and the fact 
that $B_0$ is empty, we can see that such a $\lambda$ will be of form 
$\lambda_{\mathbf{n},\mathbf{m},\Balpha,B}$ for some $\Balpha\in \mathbb N^n_0$ 
and some proper subset  $B$ of $B_n$. 
\end{proof}


\end{document}